\newcommand{\fast}{\texttt{SCvx-fast}\xspace}
\newcommand{\scvx}{\texttt{SCvx}\xspace}
\newcommand{\pac}{\textit{project-and-convexify}\xspace}
\newcommand{\orderof}{o}
\newcommand{\real}{\mathbb{R}}
\newtheorem{theorem}{Theorem}[section]
\newtheorem{lemma}{Lemma}[section]
\theoremstyle{definition}
\newtheorem{assumption}{Assumption}[section]
\newtheorem{problem}{Problem}[section]
\theoremstyle{remark}
\newtheorem*{remark}{Remark}
\title{\fast: A Superlinearly Convergent Algorithm for A Class of Non-Convex Optimal Control Problems}
\author{
        Yuanqi Mao, Beh\c cet A\c c\i kme\c se \\
                William E. Boeing Department of Aeronautics and Asgtronautics\\
        University of Washington, 
        Seattle, WA 98105
}
\date{\today}
\begin{document}
\maketitle

\begin{abstract}
In this paper, we extend the results from \cite{mao2017successive}, and formally propose the \fast algorithm, a new addition to the Successive Convexification algorithmic framework. The said algorithm solves non-convex optimal control problems with specific types of state constraints (i.e. union of convex keep-out zones) and is faster to converge than \scvx, its predecessor. In order to preserve more feasibility, the proposed algorithm uses a novel \pac procedure to successively convexify both state constraints and system dynamics, and thus a finite dimensional convex programming subproblem is solved at each succession. It also gets rid of the dependency on trust regions, gaining the ability to take larger steps and thus ultimately attaining faster convergence. The extension is in three folds as follows. i) We can now initialize the algorithm from an infeasible starting point, and regain feasibility in just one step; ii) We get rid of the smoothness conditions on the constraints so that a broader range of ``obstacles" can be included. Significant changes are made to adjust the algorithm accordingly; iii) We obtain a proof of superlinear rate of convergence, a new theoretical result for \fast. Benefiting from its specific problem setup and the \pac procedure, the \fast algorithm is particularly suitable for solving trajectory planning problems with collision avoidance constraints. Numerical simulations are performed, affirming the fast convergence rate. With powerful convex programming solvers, the algorithm can be implemented onboard for real-time autonomous guidance applications.
\end{abstract}

\section{Introduction} \label{sec:intro}

Non-convex optimal control problems emerge in a broad range of science and engineering disciplines. Finding a global solution to these problems is generally considered NP-hard. Heuristics like simulated annealing, \cite{bertsimas2010robust}, or combinatorial methods like mixed integer programming, \cite{richards2002}, can compute globally optimal solutions for special classes of problems. In many engineering applications however, finding a local optimum or even a feasible solution with much less computational effort is a more favorable route. This is particularly the case with real-time control systems, where efficiency and convergence guarantees are more valuable than optimality. An example in aerospace applications is the planetary landing problem, see~\cite{pointing2013, lars_sys12, steinfeldt2010guidance}. Non-convexities in this problem include minimum thrust constraints, nonlinear gravity fields and nonlinear aerodynamic forces. State constraints can also render the problem non-convex. A classic example is imposing collision avoidance constraints. For instance, \cite{accikmese2006convex} discusses the collision avoidance in formation reconfiguration of spacecraft, \cite{augugliaro2012generation} considers the generation of collision-free trajectories for a quad-rotor fleet, and~\cite{liu2014solving} study the highly constrained rendezvous problem.

Given the complexity of such non-convex problems, traditional Pontryagin's maximum principle-based approaches, e.g.~\cite{rockafellar1972state}, can fall short. On the other hand, directly applying optimization methods to solve the discretized optimal control problems has gained in popularity thanks to algorithmic advancements in nonlinear programming, see e.g.~\cite{hull1997, buskens}. However, general nonlinear optimization can sometimes be intractable in the sense that a bad initial guess could potentially lead to divergence, and also there are few known bounds on the computational effort needed to reach optimality. This makes it difficult to implement for real-time or mission critical applications because they cannot afford either divergence or a heavy load of computation. Convex optimization, on the other hand, can be reliably solved in polynomial time to global optimality, see e.g.~\cite{BoydConvex}. More importantly, recent advances have shown that these problems can be solved in real-time by both generic Second Order Cone Programming (SOCP) solvers, e.g.~\cite{domahidi2013ecos}, and by customized solvers which take advantage of specific problem structures, e.g.~\cite{mattingley2012, dueri2014automated}. This motivates researchers to formulate optimal control problems in a convex programming framework for real-time purposes, e.g., real-time Model Predictive Control (MPC), see~\cite{houska2011auto, Zeilinger2014683}.

In order to take advantage of these powerful convex programming solvers, one crucial step is to convexify the originally non-convex problems. Recent results on a technique known as \textit{lossless convexification}, e.g.~\cite{behcet_aut11, lars_sys12, matt_aut14} have proven that certain types of non-convex control constraints can be posed as convex ones without introducing conservativeness. \cite{liu2015entry} also gives a result on convexification of control constraints for the entry guidance application. For nonlinear dynamics and non-convex state constraints, collision avoidance constraints in particular, one simple solution is to perform query-based collision checks, see \cite{allen2016real}. However, to be more mathematically tractable, \cite{augugliaro2012generation, schulman2014motion,  chen2015decoupled} propose to use (variations of) sequential convex programming (SCP) to iteratively convexify non-convexities. While these methods usually perform well in practice, no convergence results have been reported yet. As an effort to tackle this open challenge, \cite{SCvx_cdc16} propose an successive convexification (\scvx) algorithm that successively convexifies the dynamics with techniques like \textit{virtual control} and \textit{trust regions}, and more importantly, give a convergence proof of that algorithm.

To include state constraints in the \scvx algorithmic framework, a few enhancements need to be made. \cite{hauser2006barrier} relax the state constraints by using a barrier function, but do not provide theoretical guarantees. In this paper, we relax the equations of system dynamics into inequalities by using an exact penalty function, and then we propose a \pac procedure to convexify both the state constraints and the relaxed system dynamics. While introducing conservativeness is inevitable in the process, the proposed algorithm preserves much more feasibility than similar results in~\cite{rosen1966iterative, liu2014solving,lipp2016variations}. Then, under some mild assumptions, we present a global convergence proof, which not only guarantees that the algorithm will converge, but also demonstrates that the convergent solution recovers local optimality for the original problem. Finally, we give a proof of the algorithm's superlinear convergence rate, followed by numerical evidence supporting both claims.

One clear advantage of the \fast algorithm proposed in this paper is that this algorithm does not have to resort to trust regions, as in e.g.~\cite{SCvx_cdc16,szmuk2016successive,szmuk2017successive}, to guarantee convergence. This property allows the algorithm to potentially take a large step in each succession, thereby greatly accelerating the convergence process, which is exactly the case shown by the numerical simulations. It also worth noting that the proposed algorithm only uses the Jacobian matrix, i.e. first-order information; therefore, we do not have to compute Hessian matrices, otherwise that task itself could be computationally expensive. To the best of our knowledge, the main contributions of this work are:
\begin{itemize}
	\item Proposition of \fast, an variant of \textit{Successive Convexification} algorithm with a \pac procedure handling both nonlinear dynamics and non-convex state constraints.
	\item A global convergence proof with local optimality recovery, with significant adjustments made to accommodate nonsmooth  constraints.
	\item A superlinear convergence rate proof supporting it real-time applicability from a theoretical perspective.
\end{itemize}

The remainder of this article is organized as follows.
Section~\ref{sec:prob} describes the optimal control problem we aim to solve.
%Section~\ref{sec:lit} gives a summary of relevant research in the literature.
The proposed \fast algorithm is described in Section~\ref{sec:algo}, and its convergence and rate of convergence are analyzed in Section~\ref{sec:conv}.
Section~\ref{sec:num} presents an illustrative numerical example.
Finally, Section~\ref{sec:conc} gives the conclusions.

%\subsection{Notation}
%$\R^n$ denotes set of $n$ dimensional real vectors.

\section{Problem Formulation} \label{sec:prob}
In this paper, we consider the following discrete-time optimal control problem:
\begin{problem} \label{prob:fast_ori}
	Discrete-time Optimal Control Problem with Special Constraints
	\begin{subequations}
		\begin{equation}
		\min \quad J(x_{i},u_{i}):= \sum_{i=1}^{T}\phi(x_{i},u_{i}), \label{cost:original}
		\end{equation}
		\quad subject to
		\begin{align}
		&x_{i+1}-x_{i}=f(x_{i},u_{i}) &i=1,2,T-1, \label{eq:fast_dynamics} \\
		&h(x_{i})\geq 0 &i=1,2,T, \label{eq:fast_state_con} \\
		&u_{i}\in U_{i} \subseteq \real^{m} &i=1,2,T-1, \\
		&x_{i}\in X_{i} \subseteq \real^{n} &i=1,2,T.
		\end{align}
	\end{subequations}
\end{problem}

Here, $ x_{i},u_{i} $ represent discrete state/control at each temporal point, $ T $ denotes the final time, and $ X_{i},U_{i} $ are assumed to be convex and compact sets. We also assume that the objective function in~\eqref{cost:original} is continuous and convex, as is the case in many optimal control applications. For example, the minimum fuel problem has $ \phi(x_{i},u_{i}) = \|u_{i}\| $, and the minimum time problem has $ \phi(x_{i},u_{i}) = 1 $. 
Equation~\eqref{eq:fast_dynamics} represent the system dynamics, where $ f(x_{i},u_{i})\in \real^{n} $ is, in general, a nonlinear function that is at \textbf{continuous}.
Equation~\eqref{eq:fast_state_con} are the additional state constraints, where $ h(x_{i})\in \real^{s} $ is also \textbf{continuous}, and could be nonlinear as well. Note that we do not impose non-convex control constraints here because we can leverage lossless convexification, see e.g.~\cite{behcet_aut11}, to convexify them beforehand. Finally, note that ~\eqref{eq:fast_dynamics} and~\eqref{eq:fast_state_con} render the problem non-convex.

To reflect the specialty of \texttt{SCvx-fast}, we need a few assumptions on $ f(x_{i},u_{i}) $ and $ h(x_{i}) $:
\begin{assumption} \label{asup:convex_f}
	$ f_{j}(x_{i},u_{i}) $ is a convex function over $ x_{i} $ and $ u_{i} $, $ \forall j = 1,2,...n $.
\end{assumption}
In fact, a wide range of optimal control applications, for example, systems with double integrator dynamics and aerodynamic drag (constant speed), satisfy Assumption~\ref{asup:convex_f}. It also includes all linear systems. Similarly, we also have
\begin{assumption} \label{asup:convex_h}
	$ h_{j}(x_{i}) $ is a convex function over $ x_{i} $, $ \forall j = 1,2,...s $.
\end{assumption}
An example for Assumption~\ref{asup:convex_h} is collision avoidance constraints, where the shape of each keep-out zone is convex or can be convexly decomposed. See Figure~\ref{fig:keep_out} for a simple illustration. Note that at this stage, these convexity assumptions do not change the non-convex nature of the problem.
\begin{figure}[!h]
	\begin{center}
		\includegraphics[width=0.4\textwidth]{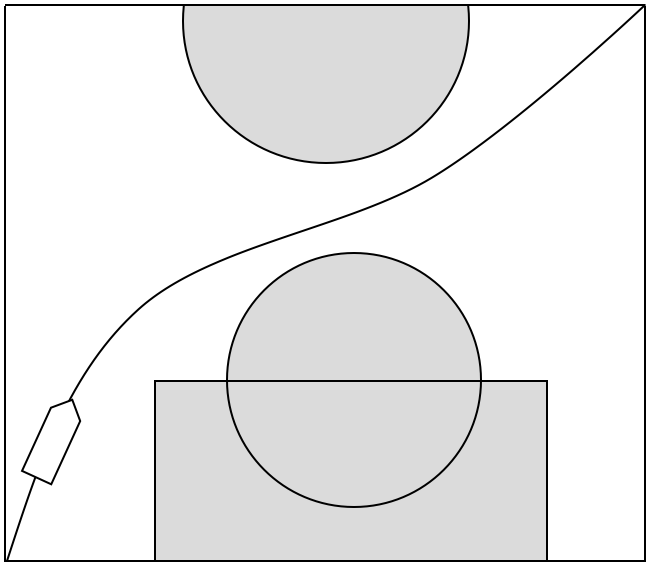}
		\caption{Convex shaped keep-out zone as state constraints} 
		\label{fig:keep_out}
	\end{center}
\end{figure}

To convert the optimal control problem into a finite dimensional optimization problem, we treat state variables $ x_{i} $ and control variables $ u_{i} $ as a single variable
$$ y = \left( x_{1}^{T},..., x_{T}^{T}, u_{1}^{T},..., u_{T-1}^{T} \right)^{T} \in \real^{N}, $$
where $ N = m(T-1)+nT $. Let $ Y $ be the Cartesian product of all $ X_{i} $ and $ U_{i} $, then $ y \in Y $. $ Y $ is a convex and compact set because $ X_{i} $ and $ U_{i} $ are. In addition, we let $ g_{i}(x_{i}, u_{i}) = f(x_{i}, u_{i}) - x_{i+1} + x_{i} $, and $ g(y) = \left( g_{1}^{T}, g_{2}^{T},..., g_{T-1}^{T} \right)^{T} \in \real^{n(T-1)} $. Note that each component of $ g(y) $ is convex over $ y $ by Assumption~\ref{asup:convex_f}, and the dynamic equation~\eqref{eq:fast_dynamics} becomes $ g(y) = 0 $. We also let $ h(y) = \left( h(x_{1})^{T}, h(x_{2})^{T},..., h(x_{T})^{T} \right)^{T} \in \real^{sT} $, then each component of $ h(y) $ is convex over $ y $ by Assumption~\ref{asup:convex_h}, and~\eqref{eq:fast_state_con} becomes $ h(y)\geq 0 $. In summary, we have the following non-convex optimization problem:
\begin{equation} \label{eq:ori_prob}
J(y^{*}) = \underset{y}{min}\{J(y)\ |\ y\in Y, g(y)=0, h(y)\geq 0 \}.
\end{equation}
By leveraging the theory of exact penalty methods, see e.g.~\cite{han1979exact},~\cite{SCvx_cdc16}, we move $ g(y)=0 $ into the objective function without compromising optimality:
\begin{theorem}[\textbf{Exactness}] \label{thm:fast_exactness}
	Let $ P(y) = J(y)+\lambda \|g(y)\|_{1} $ be the penalty function, and $ \bar{y} $ be a stationary point of
	\begin{equation} \label{eq:pen_prob}
	\underset{y}{min}\{P(y)\ |\ y\in Y, g(y)\geq 0, h(y)\geq 0 \}
	\end{equation}
	with Lagrangian multiplier $ \bar{\mu} $ for equality constraints. Then, if the penalty weight $ \lambda $ satisfies $ \lambda\geq \|\bar{\mu}\|_{\infty} $, and if $ \bar{y} $ is feasible for~\eqref{eq:ori_prob}, then $ \bar{y} $ is a critical point of~\eqref{eq:ori_prob}.
\end{theorem}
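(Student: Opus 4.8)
The plan is to prove exactness by matching the first‑order (stationarity) conditions of the penalty problem~\eqref{eq:pen_prob} against those of the original problem~\eqref{eq:ori_prob} at the shared point $\bar y$, and to use the hypothesis that $\bar y$ is feasible for~\eqref{eq:ori_prob} — which forces $g(\bar y)=0$, i.e.\ the \emph{kink} of the $\ell_1$ term. The crucial reduction is that $\|g(y)\|_1=\sum_j |g_j(y)|$ is the only nonsmooth piece of $P$, and on the relaxed region $\{g\geq 0\}$ it agrees with the smooth linear form $\lambda\,\mathbf{1}^\top g(y)$; moreover, at $g(\bar y)=0$ its subdifferential is the box $\partial\|g(\bar y)\|_1=\{\nabla g(\bar y)^\top t:\ t\in[-1,1]^{n(T-1)}\}$. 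Since the dynamics are smooth (the algorithm uses the Jacobian $\nabla g$), while the obstacle map $h$ is only convex (Assumption~\ref{asup:convex_h}) and possibly nonsmooth, I would carry $\nabla g$ and the convex subdifferential $\partial h$ throughout. I would record these facts first.

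First I would write stationarity of the penalty problem. The feasible set $\Omega=\{y\in Y:\ g(y)\geq 0,\ h(y)\geq 0\}$ is nonconvex, because $h\geq 0$ with convex $h$ is a reverse‑convex constraint (the keep‑out zones of Figure~\ref{fig:keep_out}), so stationarity of $\bar y$ reads $0\in\partial P(\bar y)+N_\Omega(\bar y)$. Invoking a constraint qualification at $\bar y$, this produces multipliers $\mu\geq 0$ (for $g\geq 0$), $\nu\geq 0$ (for $h\geq 0$), a selection $\xi\in\partial h(\bar y)$, and a normal vector $\eta\in N_Y(\bar y)$, together with some $t\in[-1,1]^{n(T-1)}$, such that $0=\nabla J(\bar y)+\lambda\,\nabla g(\bar y)^\top t-\nabla g(\bar y)^\top\mu-\xi^\top\nu+\eta$, with complementarity $\nu^\top h(\bar y)=0$ (the complementarity on $g$ being automatic since $g(\bar y)=0$).

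Next I would collapse these into the original KKT system. Setting $\bar\mu:=\lambda t-\mu$ converts the identity above into $0=\nabla J(\bar y)+\nabla g(\bar y)^\top\bar\mu-\xi^\top\nu+\eta$, which is exactly the stationarity condition of~\eqref{eq:ori_prob} with $\bar\mu$ the free‑sign multiplier of the equality $g=0$ and $\nu\geq 0$ the multiplier of $h\geq 0$. Combined with the assumed feasibility $g(\bar y)=0,\ h(\bar y)\geq 0,\ \bar y\in Y$, this certifies $\bar y$ as a critical point of~\eqref{eq:ori_prob}. The weight bound $\lambda\geq\|\bar\mu\|_\infty$ is precisely what makes the correspondence an equivalence: it guarantees that the decomposition $\bar\mu=\lambda t-\mu$ is realizable with an admissible box component $t\in[-1,1]$ and a nonnegative $\mu\geq 0$, so that the penalty problem cannot create spurious stationary points outside the critical set of~\eqref{eq:ori_prob}; I would verify this componentwise. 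As an independent and cleaner sanity check, I would give the directional‑derivative argument: for any $d$ feasible for~\eqref{eq:ori_prob} at $\bar y$ one keeps $g$ at $0$ to first order, so the one‑sided derivative of $\|g\|_1$ vanishes and $P'(\bar y;d)=J'(\bar y;d)$; since such $d$ are also feasible for the larger set $\Omega$, stationarity of $P$ forces $P'(\bar y;d)\geq 0$, hence $J'(\bar y;d)\geq 0$.

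The main obstacle I anticipate is the nonsmoothness on two fronts: the $\ell_1$ penalty at its kink $g(\bar y)=0$, and the reverse‑convex constraints $h\geq 0$, whose feasible region is nonconvex and, with the smoothness hypotheses on $h$ now dropped, possibly nondifferentiable. Making the multiplier extraction rigorous therefore requires an appropriate constraint qualification to guarantee existence of $(\mu,\nu,\xi,\eta)$, together with the correct subdifferential chain rule for $\|g(\cdot)\|_1$ and for the composite constraint built from $h$. The genuinely delicate bookkeeping is confirming that $\lambda\geq\|\bar\mu\|_\infty$ yields admissible signs for $t$ and $\mu$ in the decomposition $\bar\mu=\lambda t-\mu$; this is where the penalty‑weight threshold earns its place in the statement, and I would isolate it as the key lemma.
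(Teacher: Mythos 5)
The paper never actually proves Theorem~\ref{thm:fast_exactness}; it is imported wholesale from the exact-penalty literature, with the surrounding text deferring to \cite{han1979exact} and \cite{SCvx_cdc16} and the only in-text follow-up being the convexity remark about $P$. Your proposal therefore reconstructs the intended (cited) argument rather than diverging from it: extract multipliers from stationarity of~\eqref{eq:pen_prob}, use the feasibility hypothesis $g(\bar y)=0$ to sit at the kink and make the complementarity on $g\geq 0$ automatic, and recombine the $\ell_1$ slope with the inequality multiplier into a free-sign equality multiplier for~\eqref{eq:ori_prob}. The outline is sound, and your closing directional-derivative check is in fact the cleanest self-contained proof of the asserted direction: any direction $d$ feasible for~\eqref{eq:ori_prob} at $\bar y$ is tangent to $g=0$ and remains feasible for the larger set of~\eqref{eq:pen_prob}, so $0\leq P'(\bar y;d)=J'(\bar y;d)$.

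One piece of bookkeeping is misplaced, though the fault originates in the theorem statement itself (which speaks of a multiplier \emph{for equality constraints} of a problem that has none). In the direction you are proving --- penalty-stationary and feasible $\Rightarrow$ critical for~\eqref{eq:ori_prob} --- the threshold $\lambda\geq\|\bar\mu\|_\infty$ does no work: you \emph{define} $\bar\mu:=\lambda t-\mu$ from the penalty problem's multipliers, and any such $\bar\mu$ is admissible as a free-sign equality multiplier regardless of its size; your own sanity-check argument proves the claim without ever invoking the bound. The realizability of the decomposition $\bar\mu=\lambda t-\mu$ with $t\in[-1,1]^{n(T-1)}$ and $\mu\geq 0$ is the key step of the \emph{converse} direction (critical points of~\eqref{eq:ori_prob} are stationary for~\eqref{eq:pen_prob}, so the penalization loses no solutions), which is where the weight threshold genuinely earns its keep --- so either prove both directions or drop the threshold from this one. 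The remaining real burden, which you correctly flag but should not leave as a remark, is the constraint qualification needed to extract $(\mu,\nu,\xi,\eta)$ on the reverse-convex set $\Omega$ with $h$ merely convex and possibly nonsmooth; the paper only supplies LICQ later (Assumption~\ref{asup:fast_licq}), and it must be assumed to hold at $\bar y$ for the multiplier extraction to be legitimate.
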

Since each component of $ g(y) $ is convex, and $ \|\cdot \|_{1} $ is convex and nondecreasing due to the constraint $ g(y)\geq 0 $, then $ P(y) $ is a convex function by the composition rule of convex functions, see~\cite{BoydConvex}. This marks our first effort towards the convexification of~\eqref{eq:ori_prob}.

Let $ q(y) = \left( g(y)^{T}, h(y)^{T}\right)^{T} \in \mathbb{R}^{M} $, where $ M = sT+n(T-1) $, then we may rewrite~\eqref{eq:pen_prob} as
\begin{equation} \label{eq:final_prob}
\underset{y}{min}\{P(y)\ |\ y\in Y, q(y)\geq 0 \}
\end{equation}
where $ P(y) $ is continuous and convex, and each component of $ q(y) $ is a convex function. By doing this, we are essentially treating constraints due to dynamics as another keep-out zone. Denote
\begin{equation*}
F = \{y\ |\ y\in Y, q(y)\geq 0 \}
\end{equation*}
as the feasible set. Note that $ F $ is compact but not convex.

\section{The \texttt{SCvx-fast} Algorithm} \label{sec:algo}

\subsection{Infeasible Initialization}
Previously in~\cite{mao2017successive}, we require a feasible starting point. Now we propose a preprocessing routine that linearize the violating constraints at the starting point, which will get us out of the infeasible region in just one step. See Algorithm~\ref{algo:infeas-ini} for details and Figure~\ref{fig:infeasible} for an geometric illustration of this procedure (in 2-D case).
\begin{algorithm}[!h]
	\caption{The Infeasible Initialization Routine}
	\label{algo:infeas-ini}
	\begin{algorithmic}[1]
		\Procedure{\texttt{Infeasible-Initialization}}{$ z^{(0)} $}
		\State \textbf{input} Any point $ z^{(0)} \notin F $.
		\State \textbf{step 1} Approximate any intersecting constraints with their minimum volume ellipsoidal cover as the new constraint.
		\State \textbf{step 2} Identify any infeasible constraint $ h_i(z_{i}^{(0)}) < c $, where $ c $ represents all the constant parts of that constraint.
		\State \textbf{step 3} Linearize $ h_i(y) $ with respect to $ z_{i}^{(0)} $ as $ l_{z_{i}^{(0)}}(y) $.
		\State \textbf{step 4} Project $ z^{(0)} $ onto the half-space intersected by all the $ l_{z_{i}^{(0)}}(y) = c $ to get a new point $ \tilde{z}_{i}^{(0) } $.
		\State \textbf{return} $ \tilde{z}_{i}^{(0)} $ as the new starting point.
		\EndProcedure
	\end{algorithmic}
\end{algorithm}

\begin{figure}[!h]
	\begin{center}
		\includegraphics[width=0.5\textwidth]{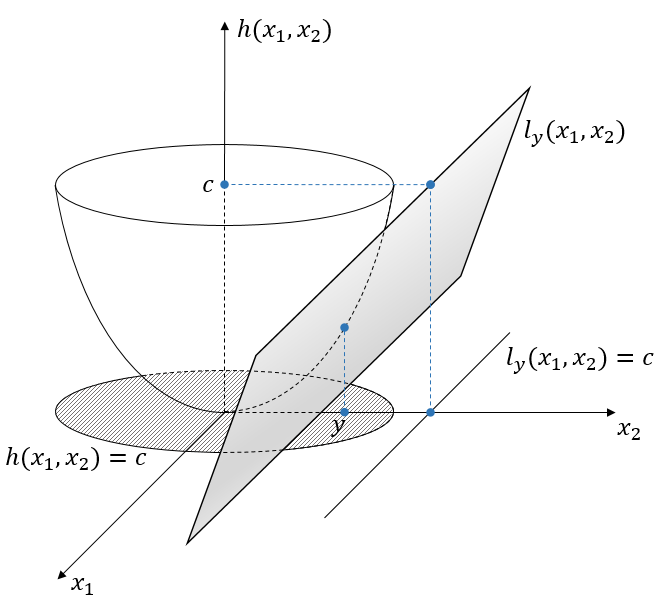}
		\caption{Initialize the algorithm at an infeasible point $ y $. The first linearization will get us to the feasible region $ l_y > c $.}
		\label{fig:infeasible}
	\end{center}
\end{figure}

\begin{theorem}[\textbf{Feasibility of $ \tilde{z}^{(0)} $}]
	Given Assumption~\ref{asup:convex_h}, Algorihtm~\ref{algo:infeas-ini} produces a feasible initial trajectory in terms of constraints, i.e. $ \tilde{z}^{(0)} \in F $.
\end{theorem}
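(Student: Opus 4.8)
The plan is to exploit the single structural fact that every constraint function is convex, so that its linearization is a global \emph{under}-estimator; projecting onto the half-spaces cut out by these linearizations then produces a point that automatically satisfies the original nonlinear constraints. Concretely, I would show that the polyhedral set onto which Step~4 projects is a convex inner approximation of $F$, and invoke the basic properties of Euclidean projection onto a convex set.

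First I would record the underestimation inequality. By Assumption~\ref{asup:convex_h} (and the reformulation establishing that every component of $q$ is convex), each constraint function $h_i$ identified in Step~2 is convex. Hence for any subgradient $\xi_i \in \partial h_i(z_i^{(0)})$ the affine map $l_{z_i^{(0)}}(y) = h_i(z_i^{(0)}) + \xi_i^{T}(y - z_i^{(0)})$ satisfies $l_{z_i^{(0)}}(y) \le h_i(y)$ for all $y$; this is exactly the first-order (subgradient) characterization of convexity, and it is what Step~3 computes. Because only this first-order inequality is used, the argument is valid verbatim in the nonsmooth case, which is why no smoothness hypothesis is required. I would also note that Step~1 replaces any intersecting keep-out zones by their minimum-volume ellipsoidal cover, which is itself convex, so Assumption~\ref{asup:convex_h} is preserved for the aggregated constraint and the underestimation inequality continues to apply.

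Next I would analyze the projection in Step~4. The set $C = \{\, y \in Y : l_{z_i^{(0)}}(y) \ge c \text{ for every relevant } i \,\}$ is an intersection of closed half-spaces with the convex compact set $Y$, hence closed and convex; provided it is nonempty, the Euclidean projection $\tilde z^{(0)} = \mathrm{proj}_C(z^{(0)})$ exists, is unique, and lies in $C$. Consequently $l_{z_i^{(0)}}(\tilde z^{(0)}) \ge c$ for each such $i$, and chaining this with the underestimation inequality gives $h_i(\tilde z^{(0)}) \ge l_{z_i^{(0)}}(\tilde z^{(0)}) \ge c$, so the constraint is satisfied while still $\tilde z^{(0)} \in Y$. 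In the single-obstacle picture of Figure~\ref{fig:infeasible} this is immediate: the projection lands on the boundary $l_{z^{(0)}}(y) = c$, and convexity pushes the true value up to the feasible side.

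The step I expect to be the main obstacle is promoting this from ``each handled constraint'' to $\tilde z^{(0)} \in F$ \emph{globally}, i.e. guaranteeing that no constraint already feasible at $z^{(0)}$ is violated by the projection, and dually that $C$ is nonempty. The clean way to close the argument is to linearize \emph{all} components of $q$ rather than only the violated ones, so that $C$ becomes a genuine convex inner approximation of $F$: by the same underestimation inequality $y \in C \Rightarrow q(y) \ge 0$, whence $C \subseteq F$ and the projection necessarily lands in $F$. Nonemptiness of $C$ then has to be established from feasibility of Problem~\ref{prob:fast_ori} together with compactness of $Y$ --- this is the one delicate point, since an underestimator-based inner approximation can in principle be empty, and the ellipsoidal-cover preprocessing of Step~1 is precisely what rules out the degenerate configurations (an iterate trapped in the overlap of several obstacles) that would otherwise leave the projection target vacuous or drive the iterate into a neighbouring keep-out zone.
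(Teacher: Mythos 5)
Your proof is correct and takes essentially the same route as the paper's: the paper expresses the key fact --- that the linearization of a convex constraint globally under-estimates it, so the half-space $l_{z_i^{(0)}}(y)\geq c$ is an inner approximation of $h_i(y)\geq c$ --- via a supporting hyperplane of $\mathrm{epi}(h)$ rather than the subgradient inequality, and then concludes exactly as you do that the projected point inherits feasibility. The two caveats you raise (constraints already satisfied at $z^{(0)}$ could be lost under a projection onto only the violated constraints' half-spaces, and nonemptiness of the target set) are genuine gaps that the paper's one-line proof passes over in silence, so your treatment is in fact the more complete one.
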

\begin{proof}
	By the convexity of $ h(y) $ from Assumption~\ref{asup:convex_h}, we know that its epigraph $ epi(h) = \{ (y,w) \, | \, y \in Y, w \in \real, h(y) \leq w \} $ is a convex set. Denote the projection point of $ z^{(0)} $ onto $ epi(h) $ as $ z_{p} $, then $ l_{z^{(0)}}(y) $ is the supporting hyperplane of $ epi(h) $ at $ z_{p} $. Thanks to the separating hyperplane theorem \cite{BoydConvex}, we have $ l_{z^{(0)}}(y)  = c $ outside of $ epi(h) $, thus $ \tilde{z}^{(0)} $, as the projection of $ z^{(0)} $, will enjoy the property $ h(\tilde{z}^{(0)}) \geq c $, i.e. $ \tilde{z}^{(0)} \in F $.
\end{proof}

\subsection{Project-and-Convexify}
Next we will introduce the \emph{project-and-convexify} procedure. For any point $ z \in F $, let $ \tilde{\triangledown}_{y} q(z) $ be the generalized Jacobian matrix of $ q(y) $ evaluated at $ z $. Now, if we directly linearize $ q(y) $ at $ z $ as in~\cite{liu2014solving}, there might be a gap between the linearized feasible region and $ F $ since $ z $ could be in the interior of $ F $. The gap will increase as $ z $ moves further away from the boundary $ q(y)=0 $. This is not a desirable situation, because a fairly large area of the feasible region is not utilized. In other words, we introduced artificial conservativeness. To address this issue, we first introduce a \textit{projection} step, which essentially projects $ z $ onto each constraint, and obtains each projection point. Then, we linearize each constraint at its own projection point. See Figure~\ref{fig:project} for an illustration in $ \mathbb{R}^{2} $.
\begin{figure}[!h]
	\begin{center}
		\includegraphics[width=0.5\textwidth]{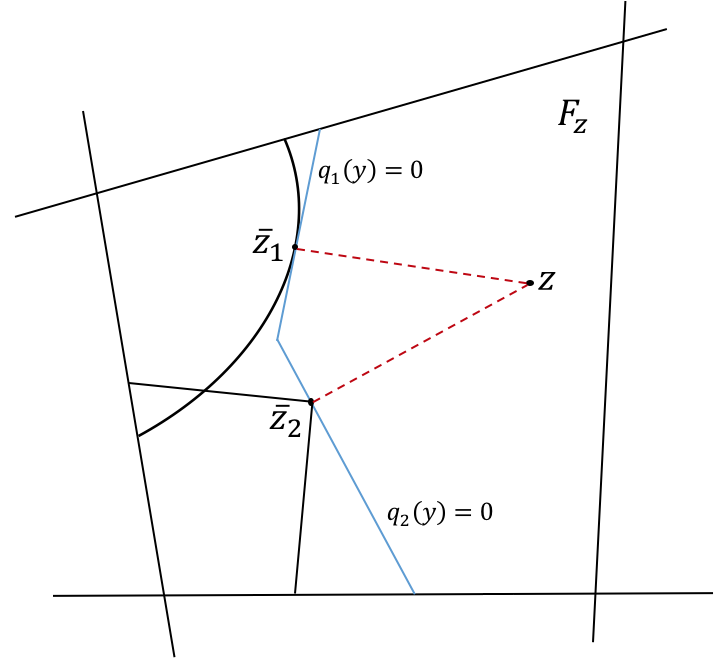}
		\caption{Project the current point $ z $ onto each constraint. Note that when the constraint is non-smooth, we pick the supporting hyperplane that is orthogonal to $ z - \bar{z} $ as our linearization.}
		\label{fig:project}
	\end{center}
\end{figure}

To formalize, let $ q_{j}(y), j=1,2...,M $ represent each component of $ q(y) $, i.e. each constraint. Note that $ q_{j}(y) $ is a convex function, hence $ q_{j}(y) \leq 0 $ is a closed convex set. Using the well-known Hilbert projection theorem, see e.g.\cite{wulbert_project}, we have
\begin{theorem}[\textbf{Uniqueness of the projection}]
	For any $ z \in F $, there exists a unique point
	\begin{equation} \label{eq:project}
	\bar{z}_j = \underset{y}{argmin} \left\lbrace \|z-y\|_{2}\ |\ q_{j}(y) \leq 0 \right\rbrace,
	\end{equation}
	called the projection of $ z $ onto $ q_{j}(y) \leq 0 $.
\end{theorem}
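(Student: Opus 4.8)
The plan is to recognize this statement as a direct instance of the Hilbert projection theorem applied to the set $C_j \definedas \{y \mid q_j(y) \leq 0\}$, so that the real work reduces to verifying the theorem's hypotheses and then supplying the two short existence and uniqueness arguments. Since the excerpt has already established that each $q_j$ is convex, its sublevel set $C_j$ is convex; and because $q_j$ is continuous, $C_j = q_j^{-1}((-\infty,0])$ is closed. I would also flag that $C_j$ is nonempty (the keep-out region associated with constraint $j$ is a genuine obstacle), which is the one hypothesis needed to make the $\mathrm{argmin}$ well-defined.

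For existence, rather than invoking abstract Hilbert-space machinery I would use the finite-dimensional Weierstrass argument: fix any $y_0 \in C_j$ and set $r \definedas \|z - y_0\|_2$. Any minimizer of $\|z - \cdot\|_2$ over $C_j$ must then lie in $C_j \cap \bar{B}(z,r)$, which is closed and bounded, hence compact. The continuous map $y \mapsto \|z - y\|_2$ attains its minimum on this compact set, and that point is a global minimizer over all of $C_j$.

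For uniqueness — the substantive part of the claim — I would exploit strict convexity of the squared Euclidean norm through the midpoint identity. Suppose $y_1, y_2 \in C_j$ both attain the minimal distance $\dsol \definedas \min_{y \in C_j}\|z - y\|_2$. By convexity of $C_j$ the midpoint $m \definedas (y_1 + y_2)/2$ lies in $C_j$, and the identity $\|z - m\|_2^2 = \tfrac12\|z - y_1\|_2^2 + \tfrac12\|z - y_2\|_2^2 - \tfrac14\|y_1 - y_2\|_2^2$ yields $\|z - m\|_2^2 = (\dsol)^2 - \tfrac14\|y_1 - y_2\|_2^2$. If $y_1 \neq y_2$ this gives $\|z - m\|_2 < \dsol$ with $m \in C_j$, contradicting minimality; hence $y_1 = y_2$.

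The point worth emphasizing is that there is no hard estimate here — both steps rest solely on closedness and convexity of $C_j$, and in particular neither uses any smoothness of $q_j$. This matters because the paper explicitly admits nonsmooth constraints (recall Figure~\ref{fig:project}, where the linearization is taken as a supporting hyperplane at a nondifferentiable boundary point): the projection $\bar{z}_j$ is well-defined and unique regardless of differentiability, and the only genuine assumption beyond what is already proved is the nonemptiness of each keep-out set.
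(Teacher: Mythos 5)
Your proposal is correct and follows the same route as the paper: identify $\{y \mid q_j(y)\le 0\}$ as a closed convex set (closed by continuity of $q_j$, convex by Assumption 2.2) and apply the Hilbert projection theorem, which the paper simply cites while you prove it in full via the Weierstrass existence argument and the midpoint identity for uniqueness. Your added remark that nonemptiness of each sublevel set is the one hypothesis the paper leaves implicit is accurate but does not change the argument.
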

Equation~\eqref{eq:project} is a simple convex program of low dimension that can be solved quickly (sub-milliseconds) using any convex programming solver. Alternatively, for some special convex sets (e.g. cylinders), \eqref{eq:project} can be solved analytically, which is even faster. Doing this for each constraint, we obtain a set of projection points, $ \{\bar{z}_{1}, \bar{z}_{2},..., \bar{z}_{M} \} $. Note that these projection points must lie on the boundary of $ q_{j}(y) \leq 0 $, i.e.
$
q_{j}(\bar{z}_{j}) = 0, \; \forall \; j = 1,..., M.
$
For a fixed $ z \in F $, let $ l_{j}(y,z) $ be the linear approximation of $ q_{j}(y) $:
\begin{equation} \label{eq:linearization}
l_{j}(y,z) = \tilde{\triangledown}_{y}q_{j}(\bar{z}_{j})(y - \bar{z}_{j}),
\end{equation}
and let $ l(y,z) = (l_{1}^{T}, l_{2}^{T},..., l_{M}^{T})^{T} \in \mathbb{R}^{M} $. Not that here when the constraint function $ q_{j} $ is not differentiable everywhere, $ \tilde{\triangledown}_{y}q_{j}(\bar{z}_{j}) $ denote the gradient to the supporting hyperplane orthogonal to $ z - \bar{z}_{j} $. For each $ z \in F $, denote
\begin{equation*}
F_{z} = \{y\ |\ y\in Y,\, l(y,z)\geq 0 \}
\end{equation*}
as the feasible region after linearization. $ F_{z} $ also defines a point-to-set mapping, $ F_{z}: z \rightarrow F_{z} $.
Note that each component of $ l(y,z)\geq 0 $, i.e. $ l_{j}(y,z)\geq 0 $ represents a half-space. Hence $ l(y,z)\geq 0 $ is the intersection of half-spaces, which means $ F_{z} $ is a convex and compact set.

\begin{remark}
	Convexification of $ F $ by using $ F_{z} $ also inevitably introduces conservativeness, but one can verify that it is the best we can do to maximize feasibility while preserving convexity.
\end{remark}

The following lemma gives an invariance result regarding the point-to-set mapping $ F_{z} $. It is essential to our subsequent analyses.
\begin{lemma}[\textbf{Invariance of $ F_{z} $}] \label{lem:invariance}
	For each $ z \in F$, we have $ z \in F_{z} \subseteq F. $
\end{lemma}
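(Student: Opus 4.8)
The plan is to prove the two inclusions $z \in F_{z}$ and $F_{z} \subseteq F$ separately, reducing both to a single subgradient inequality coming from the convexity of each $q_{j}$. Throughout I would argue constraint-by-constraint, writing $C_{j} = \{y \mid q_{j}(y) \leq 0\}$ for the convex keep-out set associated with the $j$-th constraint and $a_{j} \definedas \tilde{\triangledown}_{y}q_{j}(\bar{z}_{j})$ for the linearization vector, so that $l_{j}(y,z) = a_{j}^{T}(y - \bar{z}_{j})$. Since $z \in F$ gives $q_{j}(z) \geq 0$, the point $z$ lies outside (or on the boundary of) $C_{j}$, while its projection $\bar{z}_{j}$ from~\eqref{eq:project} lies on $\partial C_{j}$ with $q_{j}(\bar{z}_{j}) = 0$, a fact already noted in the text.

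The key geometric fact I would establish first is that $a_{j}$ is a nonnegative multiple of a genuine subgradient $\xi_{j} \in \partial q_{j}(\bar{z}_{j})$, oriented along $z - \bar{z}_{j}$. By the projection characterization underlying the Hilbert projection theorem, $z - \bar{z}_{j}$ lies in the normal cone $N_{C_{j}}(\bar{z}_{j})$; and since the normal cone of a convex sublevel set at a boundary point is the cone generated by the subdifferential, one gets $z - \bar{z}_{j} = t\,\xi_{j}$ for some $t \geq 0$ and $\xi_{j} \in \partial q_{j}(\bar{z}_{j})$. The supporting hyperplane used in~\eqref{eq:linearization} is precisely the one orthogonal to $z - \bar{z}_{j}$, so $a_{j}$ is a positive multiple of this $\xi_{j}$; in the degenerate case $z = \bar{z}_{j}$ I would read the construction as selecting $a_{j}$ to be any element of $\partial q_{j}(\bar{z}_{j})$.

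Given this, the first inclusion is immediate: $z \in Y$ because $F \subseteq Y$, and $l_{j}(z,z) = a_{j}^{T}(z - \bar{z}_{j})$ is a nonnegative multiple of $\|z - \bar{z}_{j}\|_{2}^{2} \geq 0$ (and equals $0$ when $z = \bar{z}_{j}$), so $l(z,z) \geq 0$ and hence $z \in F_{z}$. For the second inclusion, take any $y \in F_{z}$, so $y \in Y$ and $l_{j}(y,z) \geq 0$ for all $j$. Writing $a_{j} = t\,\xi_{j}$ with $t > 0$ (or $\xi_{j} = a_{j}$ in the degenerate case), the condition $l_{j}(y,z) \geq 0$ forces $\xi_{j}^{T}(y - \bar{z}_{j}) \geq 0$, and the subgradient inequality for the convex $q_{j}$ at $\bar{z}_{j}$ then gives $q_{j}(y) \geq q_{j}(\bar{z}_{j}) + \xi_{j}^{T}(y - \bar{z}_{j}) = \xi_{j}^{T}(y - \bar{z}_{j}) \geq 0$. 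As this holds for every $j$, we get $q(y) \geq 0$ and therefore $y \in F$, establishing $F_{z} \subseteq F$ and combining with the first part to yield $z \in F_{z} \subseteq F$.

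The main obstacle is the nonsmooth case: making rigorous that the ``gradient to the supporting hyperplane orthogonal to $z - \bar{z}_{j}$'' is genuinely a positive multiple of an element of $\partial q_{j}(\bar{z}_{j})$, so that the subgradient inequality is legitimately available. This hinges on identifying $N_{C_{j}}(\bar{z}_{j})$ with the conical hull of $\partial q_{j}(\bar{z}_{j})$, which needs a mild qualification such as $0 \notin \partial q_{j}(\bar{z}_{j})$ (i.e. $\bar{z}_{j}$ is not an unconstrained minimizer of $q_{j}$, so the boundary is genuinely supporting). The remaining detail is the orientation of $a_{j}$ -- that it points toward increasing $q_{j}$, i.e. along $+(z - \bar{z}_{j})$ rather than against it -- which follows from $C_{j}$ lying on the $q_{j} \leq 0$ side of its supporting hyperplane.
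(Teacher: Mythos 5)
Your proof is correct and follows essentially the same route as the paper's: show $l_j(z,z)\geq 0$ via the alignment of $\tilde{\triangledown}_{y}q_{j}(\bar{z}_{j})$ with the projection residual $z-\bar{z}_{j}$, then obtain $F_z\subseteq F$ from the first-order convexity (subgradient) inequality at $\bar{z}_{j}$. The only difference is that you make explicit, via the normal-cone characterization $N_{C_j}(\bar{z}_j)=\mathrm{cone}\,\partial q_j(\bar{z}_j)$, why the nonsmooth ``gradient to the supporting hyperplane'' is legitimately a positive multiple of a subgradient --- a point the paper's proof asserts without justification --- so your version is, if anything, more careful where the constraints are nondifferentiable.
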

\begin{proof}
	For each $ z \in F $, and $ \forall j = 1,2,..., M $, from~\eqref{eq:linearization}, we have
	\begin{equation*}
	l_{j}(z,z) = \tilde{\triangledown}_{y}q_{j}(\bar{z}_{j})(z - \bar{z}_{j}).
	\end{equation*}
	Since $ \bar{z}_{j} $ is the projection, $ (z - \bar{z}_{j}) $ is the normal vector at $ \bar{z}_{j} $, which is aligned with the gradient $ \tilde{\triangledown}_{y}q_{j}(\bar{z}_{j}) $. Hence $ \tilde{\triangledown}_{y}q_{j}(\bar{z}_{j})(z - \bar{z}_{j}) \geq 0 $, i.e. $ l_{j}(z,z) \ge 0, \forall j = 1,2,..., M $, i.e. $ z \in F_{z} $.
	
	Furthermore, since $ q_{j}(y) $ is a convex function, we have for any $ y \in F_{z} $,
	\begin{equation*}
	q_{j}(y) \geq q_{j}(\bar{z}_{j}) + \tilde{\triangledown}_{y}q_{j}(\bar{z}_{j})(y - \bar{z}_{j}) 
	= l_{j}(y,z) \geq 0,
	\end{equation*}
	which means $ y \in F $. Hence $ F_{z} \subseteq F $.
\end{proof}

\subsection{The \texttt{SCvx-fast} Algorithm}
Now that we have a convex and compact feasible region $ F_{z} $ and a convex objective function $ P(y) $, we are ready to present a successive procedure to solve the non-convex problem in~\eqref{eq:final_prob}. Note that the feasible region $ F_{z} $ is defined by $ z $. Therefore, if we start from a point $ z^{(0)} \in F $, a sequence $ \{z^{(k)} \} $ will be generated, where
\begin{equation} \label{eq:sub_prob}
z^{(k+1)} = \underset{y}{argmin}\{P(y)\ |\ y\in F_{z^{(k)}} \}, \quad k=0,1,...
\end{equation}
This is a convex programming subproblem, whose global minimizer is attained at $ z^{(k+1)} $. At these intermediate steps, $ z^{(k+1)} $ may not be the optimal solution to~\eqref{eq:final_prob}. Our goal, however, is to prove that this sequence $ \{z^{(k)} \} $ converges to a limit point $ z^{*} $, and that this limit point solves~\eqref{eq:final_prob} by \emph{project-and-convexify} at $ z^{*} $ itself, i.e., it is a ``fixed-point" satisfying
\begin{equation} \label{eq:fast_optimal}
z^{*} = \underset{y}{argmin}\{P(y)\ |\ y\in F_{z^{*}} \}.
\end{equation}
More importantly, we want to show that $ z^{*} $ gives a local optimum to~\eqref{eq:final_prob} convexified at $ z^{*} $ itself. Then by solving a sequence of convex programming subproblems, we effectively solved the non-convex optimal control problem in~\eqref{eq:ori_prob} thanks to Theorem~\ref{thm:fast_exactness}. Notably, we do not use trust regions in this process, hence no trust-region updating mechanism involved. As a result, this procedure converges much faster than \scvx, and thus we call it the \texttt{SCvx-fast} algorithm. It is summarized in Algorithm~\ref{algo:SCvx-fast}.

\begin{algorithm}[!h]
	\caption{The \texttt{SCvx-fast} Algorithm}
	\label{algo:SCvx-fast}
	\begin{algorithmic}[1]
		\Procedure{\fast}{$z^{(0)},\lambda$}
		\State \textbf{input} Initial point $ z^{(0)} $. Penalty weight $ \lambda \geq 0 $.
		\If{$ z^{(0)} \notin F $}
			\State \textbf{Infeasibility Preprocessing} Execute Algorithm~\ref{algo:infeas-ini} to get a new $ \tilde{z}^{(0)} \in F $.
		\EndIf
		\While {not converged}
		\State \textbf{step 1 Project} At each succession $ k $, we have the current point $ z^{(k)} $. For each constraint $ q_{j}(y) $, solve the problem in~\eqref{eq:project} to get a projection point $ \bar{z_{j}^{(k)}} $.
		\State \textbf{step 2 Convexify} Construct the convex feasible region $ F_{z^{(k)}} $ by using~\eqref{eq:linearization}.
		\State \textbf{step 3} Solve the convex subproblem in~\eqref{eq:sub_prob} to get $ z^{(k+1)} $. Let $ z^{(k)}\leftarrow z^{(k+1)} $ and go to the next succession.
		\EndWhile
		\State \textbf{return} $ z^{(k+1)} $.
		\EndProcedure
	\end{algorithmic}
\end{algorithm}

\section{Convergence Analysis} \label{sec:conv}

\subsection{Global Convergence}
In this section, we proceed to show that Algorithm~\ref{algo:SCvx-fast} does converge to a point $ z^{*} $ that indeed satisfies~\eqref{eq:fast_optimal}. First we must assume the application of regular constraint qualifications, namely the Linear Independence Constraint Qualification (LICQ) and the Slater's condition. They can be formalized as the following:

\begin{assumption}[\textbf{LICQ}] \label{asup:fast_licq}
	For each $ z \in F $, the generalized Jacobian matrix $ \tilde{\triangledown}_{y} q(z) $ has full row rank, i.e. rank($ \tilde{\triangledown}_{y} q(z) $) $ = M $.
\end{assumption}

\begin{assumption}[\textbf{Slater's condition}] \label{asup:slater}
	For each $ z \in F $, the convex feasible region $ F_{z} $ contains interior points.
\end{assumption}

These assumptions do impose some practical restrictions on our feasible region. Figure~\ref{fig:licq} shows some examples where LICQ or Slater's condition might fail. For scenarios like (a), we may perturb our discrete points to break symmetry, For scenarios like (b), we have to assume the connectivity of the feasible region. In other words, our feasible region cannot be degenerate at some point, for example, in collision avoidance the feasible region is not allowed to be completely obstructed.
\begin{figure}[!h]
	\begin{center}
		\includegraphics[width=0.8\textwidth]{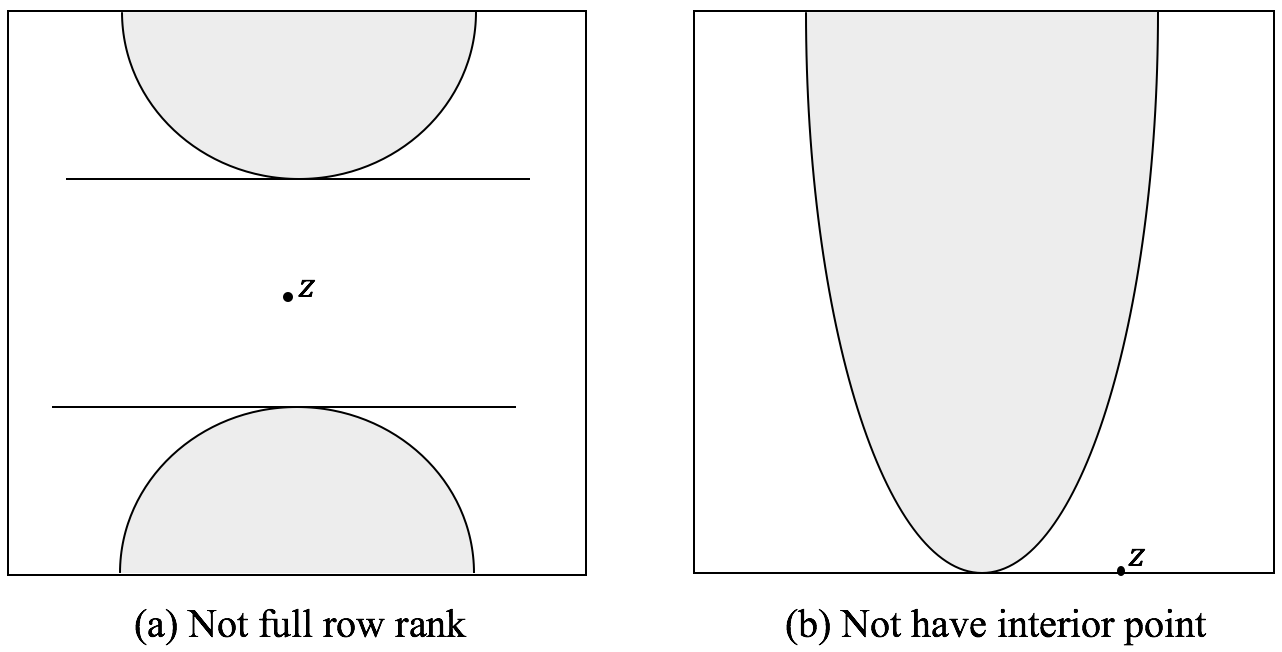}
		\caption{Cases where LICQ or Slater's condition fails} 
		\label{fig:licq}
	\end{center}
\end{figure}

To analyze convergence, first we need to show that the point-to-set mapping $ F_{z} $ is continuous in the sense that given any point $ z^{(1)} \in F $ and $ y^{(1)} \in F_{z^{(1)}} $, then for any point $ z^{(2)} \in F $ in the neighborhood of $ z^{(1)} $, there exists a $ y^{(2)} \in F_{z^{(2)}} $ that is close enough to $ y^{(1)} $.

First, We have the following lemma:
\begin{lemma}[\textbf{Lipschitz continuity of $ l(y,z) $}] \label{lem:fast_lipschitz}
	The linear approximation $ l(y,z) $ in~\eqref{eq:linearization} is Lipschitz continuous in $ z $, that is
	\begin{equation} \label{eq:fast_lipschitz}
	\| l(y,z^{(1)}) - l(y,z^{(2)}) \| \leq \gamma \| z^{(1)} - z^{(2)} \|,
	\end{equation}
	for any $ z^{(1)}, z^{(2)} \in F $ and constant $ \gamma $.
\end{lemma}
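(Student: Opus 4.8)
The plan is to expose the two ways in which $l_j(y,z)$ depends on $z$ and to control each separately. Writing $a_j(z) \definedas \tilde{\triangledown}_{y}q_{j}(\bar{z}_{j}(z))$ for the generalized gradient at the projection point and $\bar{z}_{j}(z)$ for the projection map defined by~\eqref{eq:project}, the linearization~\eqref{eq:linearization} reads $l_{j}(y,z) = a_j(z)\,(y-\bar{z}_{j}(z))$. Thus $z$ enters only through the projection point $\bar{z}_{j}(z)$ and through the gradient $a_j(z)$ evaluated there. I would first decompose the difference for fixed $y$ by adding and subtracting a cross term,
\begin{equation*}
l_{j}(y,z^{(1)})-l_{j}(y,z^{(2)}) = \bigl[a_j(z^{(1)})-a_j(z^{(2)})\bigr](y-\bar{z}_{j}(z^{(1)})) + a_j(z^{(2)})\bigl[\bar{z}_{j}(z^{(2)})-\bar{z}_{j}(z^{(1)})\bigr],
\end{equation*}
so that the task reduces to bounding (i) the change in the projection point and (ii) the change in the gradient, with the spurious factors $\|y-\bar{z}_{j}\|$ and $\|a_j\|$ controlled by compactness.

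For (i), the key tool is the standard non-expansiveness of the Euclidean projection onto a closed convex set: since $\{\,q_{j}(y)\le 0\,\}$ is closed and convex, the projection map is $1$-Lipschitz, giving $\|\bar{z}_{j}(z^{(1)})-\bar{z}_{j}(z^{(2)})\| \le \|z^{(1)}-z^{(2)}\|$ directly. For the compactness factors, because $Y$ is compact with some diameter $D$ and every $\bar{z}_{j}\in F\subseteq Y$, we have $\|y-\bar{z}_{j}(z^{(1)})\|\le D$ for all $y\in Y$; moreover a continuous (generalized) gradient over the compact set $Y$ is uniformly bounded, say $\|a_j(z)\|\le G$. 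Combining these with step (ii) and summing the component bounds over $j=1,\dots,M$ then yields a single constant $\gamma$ depending on $M$, $D$, $G$, and the gradient's Lipschitz modulus, which is exactly the claimed~\eqref{eq:fast_lipschitz} with uniformity in $y$.

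The main obstacle is step (ii): showing that $z\mapsto a_j(z)=\tilde{\triangledown}_{y}q_{j}(\bar{z}_{j}(z))$ is Lipschitz. In the smooth case this is immediate by composing a Lipschitz gradient $\tilde{\triangledown}_{y}q_{j}$ (valid on the compact set $Y$) with the $1$-Lipschitz projection, so $\|a_j(z^{(1)})-a_j(z^{(2)})\|\le L\,\|\bar{z}_{j}(z^{(1)})-\bar{z}_{j}(z^{(2)})\|\le L\,\|z^{(1)}-z^{(2)}\|$. The genuinely delicate part is the \emph{nonsmooth} regime, where $\tilde{\triangledown}_{y}q_{j}(\bar{z}_{j})$ is not a classical gradient but the normal of the supporting hyperplane chosen orthogonal to $z-\bar{z}_{j}$, as described after~\eqref{eq:linearization}. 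There I would argue that the selected direction inherits Lipschitz behavior from $z-\bar{z}_{j}$ itself: the numerator $z-\bar{z}_{j}(z)$ is Lipschitz in $z$ (triangle inequality plus non-expansiveness of the projection), while its magnitude stays bounded away from zero on the relevant region under the Slater and LICQ hypotheses (Assumptions~\ref{asup:slater} and~\ref{asup:fast_licq}), which rule out the degenerate configurations where the supporting-hyperplane normal could jump. This is where I expect the analysis to require the most care, since the whole point of the extension is to admit nonsmooth ``obstacles,'' and the Lipschitz modulus must be argued uniformly over the compact feasible set $F$ rather than pointwise.
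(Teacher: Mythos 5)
Your overall route is the same as the paper's: view $l_j(\cdot,z)$ as the composition of the projection map $z\mapsto\bar{z}_j(z)$ (non-expansive by the Hilbert projection theorem) with the affine map built from the (generalized) gradient at the projection point, and sum over $j$. Your version is executed more carefully — the explicit cross-term decomposition, the compactness bounds $\|y-\bar{z}_j\|\le D$ and $\|a_j\|\le G$, and the reduction to Lipschitzness of $z\mapsto a_j(z)$ are all correct and make precise what the paper leaves implicit. Step (i) and the smooth branch of step (ii) are fine.

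The genuine gap is in your nonsmooth branch of step (ii). You propose that the selected supporting-hyperplane normal is Lipschitz because the direction $z-\bar{z}_j(z)$ is Lipschitz and its magnitude ``stays bounded away from zero'' under Assumptions~\ref{asup:slater} and~\ref{asup:fast_licq}. That last claim is false: since $\bar{z}_j(z)$ is the projection of $z\in F$ onto $\{q_j\le 0\}$, we have $\bar{z}_j(z)=z$ exactly when $q_j(z)=0$, i.e.\ whenever constraint $j$ is active at $z$ — which is the generic situation at and near a converged solution, and is excluded by neither LICQ nor Slater. Near such points $\|z-\bar{z}_j(z)\|\to 0$, the normalized direction $(z-\bar{z}_j)/\|z-\bar{z}_j\|$ is a quotient of two quantities each of order $\|z^{(1)}-z^{(2)}\|$, and at a kink of the boundary the selected normal can jump discontinuously as $z$ moves between regions projecting onto different faces; so $a_j(z)$ need not be Lipschitz, or even continuous, in $z$. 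You have in fact located the real weak point of the lemma — the paper's own justification (``it is constant'') only addresses constancy in $y$ for fixed $z$ and says nothing about variation with $z$ — but your proposed repair does not close it. Closing it honestly requires an extra hypothesis (e.g.\ a uniformly smooth boundary with single-valued, Lipschitz outward normals on the relevant portion of $\partial\{q_j\le0\}$) or a restatement of the Lipschitz claim that avoids the selected-subgradient map altogether.
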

\begin{proof}
	Each $ l_{j}(y,z) $ is in fact a composition of two mappings. The first mapping maps $ z $ to its projection $ \bar{z}_{j} $. This mapping is defined by the optimization problem in~\eqref{eq:project}. It is a well-known result that this mapping is non-expansive (i.e. Lipschitz continuous with constant 1), See e.g.~\cite{wulbert_project}. The second mapping is defined by the auxiliary function:
	\begin{equation*}
	a_{j}(y,\bar{z}_{j}) = \tilde{\triangledown}_{y}q_{j}(\bar{z}_{j})(y - \bar{z}_{j}),
	\end{equation*}
	If $ q(y) \in C^{2}(Y) $, then $ \tilde{\triangledown}_{y}q_{j}(\bar{z}_{j}) $ is obviously Lipschitz continuous. When $ q(y) $ is non-smooth, since we are taking the gradient to the supporting hyerplane at $ \bar{z}_{j} $ as $ \tilde{\triangledown}_{y}q_{j}(\bar{z}_{j}) $, it is also naturally Lipschitz continuous sicne it is constant. Therefore, $ a_{j}(y,\bar{z}_{j}) $ is always Lipschitz continuous in $ \bar{z}_{j} $.
	By the composition rule of two Lipschitz continuous functions, we have $ l_{j}(y,z) $ is Lipschitz continuous, for all $ j = 1,2,..., M $. Therefore, sum over $ j $ gives the Lipschitz continuity of $ l(y,z) $.
\end{proof}

Now with Assumptions~\ref{asup:fast_licq},~\ref{asup:slater} and Lemma~\ref{lem:fast_lipschitz}, we are ready to prove the continuity of point-to-set mapping $ F_{z} $. The result is given as follows:

\begin{lemma}[\textbf{Continuity of mapping $ F_{z} $}] \label{lem:fast_continuity}
	Given $ z^{(1)} \in F $ and $ y^{(1)} \in F_{z^{(1)}} $, then given $ \epsilon > 0 $, there exists a $ \delta > 0 $ so that for any point $ z^{(2)} \in F $ with $ \|z^{(2)} - z^{(1)}\| \leq \delta $, there exists a $ y^{(2)} \in F_{z^{(2)}} $ such that $ \|y^{(2)} - y^{(1)}\| \leq \epsilon $.
\end{lemma}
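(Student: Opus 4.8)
The plan is to prove what is essentially the inner (lower) semicontinuity of the set-valued map $ F_z $, and the single lever that makes it work is Slater's condition (Assumption~\ref{asup:slater}): it supplies a feasibility margin at $ z^{(1)} $ that is large enough to absorb the perturbation of the linearized constraints when $ z $ moves to $ z^{(2)} $. Since each $ l_j(\cdot,z) $ is affine in $ y $ and $ F_z $ is a polyhedron intersected with the compact convex set $ Y $, the geometry of any individual set is trivial; the real content is controlling how the defining half-spaces tilt with $ z $, which is exactly what the Lipschitz bound~\eqref{eq:fast_lipschitz} quantifies. I do not expect to need LICQ (Assumption~\ref{asup:fast_licq}) for this step; it is more naturally invoked in the later optimality and multiplier arguments.

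First I would invoke Assumption~\ref{asup:slater} at the fixed point $ z^{(1)} $ to pick a strictly interior point $ \hat{y} \in F_{z^{(1)}} $, i.e. one with $ l_j(\hat{y},z^{(1)}) \geq \sigma $ for all $ j=1,\dots,M $ and some uniform margin $ \sigma > 0 $ (uniformity over $ j $ is free because there are finitely many constraints). Then, for $ t \in (0,1] $, I would form the segment point $ y_t = (1-t)y^{(1)} + t\hat{y} $, which lies in $ Y $ by convexity. Using that $ l_j $ is affine in $ y $ together with $ l_j(y^{(1)},z^{(1)}) \geq 0 $, this gives $ l_j(y_t,z^{(1)}) \geq t\sigma $, so $ y_t $ is strictly feasible for $ F_{z^{(1)}} $ with a margin I can tune through $ t $.

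Next I would transfer this margin from $ z^{(1)} $ to $ z^{(2)} $. Writing $ l_j(y_t,z^{(2)}) = l_j(y_t,z^{(1)}) + \big[\,l_j(y_t,z^{(2)}) - l_j(y_t,z^{(1)})\,\big] $ and bounding the bracketed difference by~\eqref{eq:fast_lipschitz} with the (uniform over the compact $ Y $) constant $ \gamma $, I obtain $ l_j(y_t,z^{(2)}) \geq t\sigma - \gamma\|z^{(2)}-z^{(1)}\| $. Hence $ y_t \in F_{z^{(2)}} $ as soon as $ \gamma\|z^{(2)}-z^{(1)}\| \leq t\sigma $.

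Finally I would couple the two free parameters to meet the $ \epsilon $--$ \delta $ specification. With $ D = \mathrm{diam}(Y) < \infty $, the estimate $ \|y_t - y^{(1)}\| = t\|\hat{y}-y^{(1)}\| \leq tD $ shows that taking $ t = \min\{1,\epsilon/D\} $ already forces $ \|y_t - y^{(1)}\| \leq \epsilon $; with $ t $ thus fixed, setting $ \delta = t\sigma/\gamma $ makes $ y^{(2)} := y_t $ lie in $ F_{z^{(2)}} $ whenever $ \|z^{(2)}-z^{(1)}\| \leq \delta $. I expect the main obstacle to be the order of quantifiers rather than any individual estimate: $ \sigma $ is pinned down by $ z^{(1)} $ and the chosen interior point, $ t $ must be fixed from $ \epsilon $ before $ \delta $ is selected, and the Lipschitz constant in~\eqref{eq:fast_lipschitz} must genuinely be uniform in $ y $ over $ Y $ for the bound on the bracket to hold at the moving point $ y_t $. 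Assembling this dependency chain correctly is the crux.
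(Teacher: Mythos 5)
Your proof is correct, but it takes a genuinely different route from the paper's. The paper argues by an explicit correction step: it collects the violated linearized constraints into a vector $\bar{l}$, bounds $\|\bar{l}\| \leq \gamma\|z^{(1)}-z^{(2)}\|$ via Lemma~\ref{lem:fast_lipschitz}, and then sets $y^{(2)} = y^{(1)} - \tilde{\triangledown}_{y} q(z^{(2)})^{T} (\tilde{\triangledown}_{y} q(z^{(2)})\tilde{\triangledown}_{y} q(z^{(2)})^{T})^{-1} \bar{l}$, a minimum-norm Newton-type step whose well-posedness and size estimate $\|y^{(2)}-y^{(1)}\| \leq \beta\gamma\|z^{(1)}-z^{(2)}\|$ rest squarely on LICQ (Assumption~\ref{asup:fast_licq}) through the bound~\eqref{eq:deriv_ineq}; Slater's condition is invoked only in a second case to handle $y^{(1)}$ on the boundary of $Y$. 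You instead use no LICQ at all: you take a Slater point $\hat{y}$ of $F_{z^{(1)}}$, slide $y^{(1)}$ toward it to buy a feasibility margin $t\sigma$ (exploiting affineness of $l_j(\cdot,z)$ in $y$), and absorb the tilting of the half-spaces with that margin via the same Lipschitz bound. Your argument is arguably cleaner: it needs no case split on whether $y^{(1)}$ is interior to $Y$ (the convex combination stays in $Y$ automatically), it avoids inverting $\tilde{\triangledown}_{y} q\,\tilde{\triangledown}_{y} q^{T}$, and it makes the quantifier dependency chain explicit. What the paper's construction buys in exchange is a bound $\|y^{(2)}-y^{(1)}\| \leq \beta\gamma\|z^{(1)}-z^{(2)}\|$ whose constant does not involve the Slater margin at $z^{(1)}$, a form the authors reuse almost verbatim in Lemma~\ref{lem:inverse_continuity} for the rate-of-convergence analysis, whereas your $\delta = t\sigma/\gamma$ degrades as $\sigma$ shrinks. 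One small caveat on your reading of Assumption~\ref{asup:slater}: ``$F_{z^{(1)}}$ contains interior points'' yields a point with $l_j(\hat{y},z^{(1)})>0$ for every $j$ only when no $\tilde{\triangledown}_{y}q_j(\bar{z}_j)$ vanishes, which here is guaranteed by LICQ, so a trace of Assumption~\ref{asup:fast_licq} sneaks back in; and, as you correctly flag, both proofs require the Lipschitz constant of Lemma~\ref{lem:fast_lipschitz} to be uniform in $y$ over the compact set $Y$.
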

\begin{proof}
	From Assumption~\ref{asup:fast_licq}, we know that the generalized Jacobian matrix $ \tilde{\triangledown}_{y} q(z) $ has full row rank. Thus matrix $ \tilde{\triangledown}_{y} q(z)\tilde{\triangledown}_{y} q(z)^{T} $ is symmetric and positive definite for any $ z \in F $. Consequently, there exists $ \beta $ such that
	\begin{equation} \label{eq:deriv_ineq}
	\| (\tilde{\triangledown}_{y} q(z)\tilde{\triangledown}_{y} q(z)^{T})^{-1} \| \leq \beta^{2}, \quad \forall z \in F.
	\end{equation}
	If $ y^{(1)} \in F_{z^{(2)}} $, then take $ y^{(2)} = y^{(1)} $ such that $ \|y^{(2)} - y^{(1)}\| =0 \leq \epsilon $. Now suppose $ y^{(1)} \notin F_{z^{(2)}} $, then there exists at least one $ j $, such that $ l_{j}(y^{(1)}, z^{(2)}) < 0 $. Let
	\begin{equation*}
	\bar{l}_{j} = \begin{cases}
	l_{j}(y^{(1)}, z^{(2)}) & \quad l_{j}(y^{(1)}, z^{(2)}) < 0, \\
	0 & \quad l_{j}(y^{(1)}, z^{(2)}) \geq 0.
	\end{cases}
	\end{equation*}
	Note that $ l_{j}(y^{(1)}, z^{(1)}) \geq 0 $, then by definition,
	\begin{equation*}
	| \bar{l}_{j} | \leq | l_{j}(y^{(1)}, z^{(1)}) - l_{j}(y^{(1)}, z^{(2)}) |, \quad \forall j = 1, 2, ..., M.
	\end{equation*}
	Hence we have
	\begin{equation} \label{eq:norm_L_bar}
	\| \bar{l} \| \leq \| l(y^{(1)}, z^{(1)}) - l(y^{(1)}, z^{(2)}) \| \leq \gamma \| z^{(1)} - z^{(2)} \|.
	\end{equation}
	The second inequality follows from~\eqref{eq:fast_lipschitz} in Lemma~\ref{lem:fast_lipschitz}.
	
	Now we consider two cases: \\
	\textbf{Case 1}: $ y^{(1)} $ is an interior point of $ Y $, then $ \exists \, \epsilon_{1} \in (0, \epsilon] $, such that $ y \in Y $ for all $ y $ satisfies $ \| y - y^{(1)} \| \leq \epsilon_{1} $. \\
	Let $ \|z^{(2)} - z^{(1)}\| \leq \delta $, with $ \delta = \epsilon_{1}/\beta\gamma $, and let
	\begin{equation} \label{eq:y2}
	y^{(2)} = y^{(1)} - \tilde{\triangledown}_{y} q(z^{(2)})^{T} (\tilde{\triangledown}_{y} q(z^{(2)})\tilde{\triangledown}_{y} q(z^{(2)})^{T})^{-1} \bar{l}.
	\end{equation}
	We need to verify that $ y^{(2)} \in F_{z^{(2)}}$. First, we have
	\begin{equation} \label{eq:Lj2}
	l_{j}(y^{(2)},z^{(2)}) = \tilde{\triangledown}_{y}q_{j}(\bar{z}_{j}^{(2)})(y^{(2)} - \bar{z}_{j}^{(2)}).
	\end{equation}
	Substitute $ y^{(2)} $ from~\eqref{eq:y2} into the stacked form of~\eqref{eq:Lj2}, and then unstack, we get
	\begin{align*}
	l_{j}(y^{(2)},z^{(2)}) &= \tilde{\triangledown}_{y}q_{j}(\bar{z}_{j}^{(2)})(y^{(1)} - \bar{z}_{j}^{(2)}) - \bar{l}_{j} \\
	&= l_{j}(y^{(1)},z^{(2)}) - \bar{l}_{j} \\
	&\geq 0.
	\end{align*}
	The last inequality follows from the definition of $ \bar{l}_{j} $. Therefore, $ l(y^{(2)},z^{(2)}) \geq 0 $. \\
	Next we need to show that $ y^{(2)} \in Y $. Rearrange terms in~\eqref{eq:y2}, we have
	\begin{align*}
	\| y^{(2)} - y^{(1)} \|^{2} &= \bar{l}^{T} (\tilde{\triangledown}_{y} q(z^{(2)})\tilde{\triangledown}_{y} q(z^{(2)})^{T})^{-1} \bar{l} \\
	&\leq \beta^{2} \| \bar{l} \|^{2} \\
	&\leq \beta^{2} \gamma^{2} \| z^{(1)} - z^{(2)} \|^{2}
	\end{align*}
	The inequalities follow from~\eqref{eq:deriv_ineq} and~\eqref{eq:norm_L_bar}. Thus we have $ \| y^{(2)} - y^{(1)} \| \leq \beta\gamma\delta = \epsilon_{1} $, i.e. $ y^{(2)} \in Y $. So now we have verified $ y^{(2)} \in F_{z^{(2)}}$. Since $ \epsilon_{1} \leq \epsilon $, we have $ \| y^{(2)} - y^{(1)} \| \leq \epsilon $, so that $ F_{z} $ is continuous.
	
	\textbf{Case 2}: $ y^{(1)} $ is a boundary point of $ Y $. From Assumption~\ref{asup:slater}, $ F_{z^{(1)}} $ has interior points. Also since $ F_{z^{(1)}} $ is a convex set, it is a well known fact that there are interior points in the $ \epsilon- $Neighborhood of every point in $ F_{z^{(1)}} $. Then we can apply the same argument as in Case 1, to get that $ F_{z} $ is continuous.
\end{proof}

One way to show convergence is to demonstrate the convergence of objective functions, $ P(z^{(k)}) $. Now let's define
\begin{equation*}
\Phi (z) := \underset{y}{min}\{P(y)\ |\ y\in F_{z} \}
\end{equation*}
to be the function that maps the point $ z $ we are solving at in each iteration to the optimal value of the objective function in $ F_{z} $. By using the continuity of the point-to-set mapping $ F_{z} $, the following lemma gives the continuity of the function $ \Phi (z) $.

\begin{lemma} \label{lem:continuity_phi}
	$ \Phi (z) $ is continuous for $ z \in F $.
\end{lemma}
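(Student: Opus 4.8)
The plan is to establish continuity of $\Phi$ by proving upper and lower semicontinuity separately; together these give the claim. Conceptually this is an instance of Berge's maximum theorem, but since the two ingredients — the (inner, lower-hemicontinuity) approximation in Lemma~\ref{lem:fast_continuity} and the compactness/Lipschitz structure — enter from different places, I would argue the two halves directly rather than invoke the theorem wholesale.

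For the upper-semicontinuity half, fix $z^{(1)} \in F$ and let $y^{*}_{1}$ be a minimizer, so $\Phi(z^{(1)}) = P(y^{*}_{1})$. Given $\epsilon > 0$, Lemma~\ref{lem:fast_continuity} supplies $\delta > 0$ such that for every $z^{(2)} \in F$ with $\|z^{(2)} - z^{(1)}\| \le \delta$ there is a feasible $y^{(2)} \in F_{z^{(2)}}$ with $\|y^{(2)} - y^{*}_{1}\| \le \epsilon$. Since $y^{(2)}$ is admissible for the subproblem defining $\Phi(z^{(2)})$, we get $\Phi(z^{(2)}) \le P(y^{(2)})$, and continuity of $P$ bounds $P(y^{(2)}) - P(y^{*}_{1})$ by the modulus of continuity of $P$ at $y^{*}_{1}$. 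Letting $\epsilon \to 0$ yields $\limsup_{z^{(2)} \to z^{(1)}} \Phi(z^{(2)}) \le \Phi(z^{(1)})$.

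For the lower-semicontinuity half, take a sequence $z^{(n)} \to z^{(1)}$ along which $\Phi(z^{(n)})$ converges to $\liminf_{z \to z^{(1)}}\Phi(z)$, and let $y^{(n)}$ be the corresponding minimizers, $\Phi(z^{(n)}) = P(y^{(n)})$. By the invariance result (Lemma~\ref{lem:invariance}) each $y^{(n)} \in F_{z^{(n)}} \subseteq F$, and $F$ is compact, so a subsequence $y^{(n_k)} \to \bar{y} \in F$. The key step is to show $\bar{y} \in F_{z^{(1)}}$: since each $l_{j}(y,z)$ is affine in $y$ and Lipschitz in $z$ (Lemma~\ref{lem:fast_lipschitz}) it is jointly continuous, so $l_{j}(\bar{y}, z^{(1)}) = \lim_{k} l_{j}(y^{(n_k)}, z^{(n_k)}) \ge 0$ for every $j$, giving $\bar{y} \in F_{z^{(1)}}$. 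Then $\Phi(z^{(1)}) \le P(\bar{y}) = \lim_{k} P(y^{(n_k)}) = \liminf_{z \to z^{(1)}} \Phi(z)$, which is lower semicontinuity.

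The main obstacle I expect is this lower-semicontinuity half, and specifically the claim that the limit $\bar{y}$ of the minimizers stays feasible for the limiting region $F_{z^{(1)}}$. Lemma~\ref{lem:fast_continuity} only delivers the inner approximation, so feasibility of the limit must instead be recovered from a closed-graph property of the mapping $F_{z}$, which rests on the joint continuity of $l(y,z)$ assembled from Lemma~\ref{lem:fast_lipschitz} together with the uniform containment $F_{z} \subseteq F$ of Lemma~\ref{lem:invariance}. Care is also needed that the projections $\bar{z}_{j}$ and hence the gradients $\tilde{\triangledown}_{y} q_{j}(\bar{z}_{j})$ vary continuously with $z$; this is exactly the non-expansiveness/Lipschitz content already recorded, so I would lean on it rather than reprove it.
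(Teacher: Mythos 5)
Your proof is correct, but only half of it coincides with the paper's argument. The paper proves the lemma in one stroke: it takes the two minimizers $y^{(1)},y^{(2)}$, assumes without loss of generality that $\Phi(z^{(1)})\leq\Phi(z^{(2)})$, transfers the minimizer from the smaller side into the other feasible region via Lemma~\ref{lem:fast_continuity}, and concludes $|\Phi(z^{(2)})-\Phi(z^{(1)})|\leq\epsilon$. That single symmetric argument is exactly your upper-semicontinuity half; the paper obtains the lower bound for free from the WLOG, i.e.\ by letting whichever point has the smaller optimal value play the role of the base point in Lemma~\ref{lem:fast_continuity}. Your lower-semicontinuity half is genuinely different: instead of re-applying the inner approximation with the roles of $z^{(1)}$ and $z^{(2)}$ swapped, you extract a convergent subsequence of minimizers (using $F_{z}\subseteq F$ compact from Lemma~\ref{lem:invariance}) and show the limit remains feasible for $F_{z^{(1)}}$ via joint continuity of $l(y,z)$ --- a closed-graph property of the mapping $z\mapsto F_{z}$ that the paper never states explicitly but which does follow from Lemma~\ref{lem:fast_lipschitz} together with boundedness of the gradients $\tilde{\triangledown}_{y}q_{j}(\bar{z}_{j})$ on the compact set $Y$. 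What your route buys is robustness: the paper's WLOG implicitly applies Lemma~\ref{lem:fast_continuity} with a \emph{varying} base point (whichever of the two has the smaller value), so the $\delta$ it supplies depends on that moving point and its minimizer, a uniformity issue the paper glosses over; your compactness argument sidesteps this entirely. The cost is that you must justify the closed-graph step, which is a short but additional verification. Both proofs are valid given the preceding lemmas.
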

\begin{proof}
	Given any two points $ z^{(1)}, z^{(2)} \in F $, and they are close to each other, i.e. $ \| z^{(1)} - z^{(2)} \| \leq \delta $. Let
	\begin{align*}
	& y^{(1)} = \underset{y}{argmin}\{P(y)\ |\ y\in F_{z^{(1)}} \} \; \text{and} \\
	& y^{(2)} = \underset{y}{argmin}\{P(y)\ |\ y\in F_{z^{(2)}} \},
	\end{align*}
	then we have $ \Phi (z^{(1)})=P(y^{(1)}) $ and $ \Phi (z^{(2)})=P(y^{(2)}) $. \\
	Without loss of generality, let $ \Phi (z^{(1)}) \leq \Phi (z^{(2)}) $, i.e. $ P(y^{(1)}) \leq P(y^{(2)}) $. From Lemma~\ref{lem:fast_continuity}, the continuity of $ F_{z} $, there exists $ \hat{y}^{(2)} \in F_{z^{(2)}} $, such that $ \| \hat{y}^{(2)} - y^{(1)} \| \leq \eta $ for any $ \eta >0 $. Then since $ P(y) $ is continuous, we have
	\begin{equation} \label{eq:P_conti}
	| P(\hat{y}^{(2)}) - P(y^{(1)}) | \leq \epsilon.
	\end{equation}
	Since $ y^{(2)} $ is the minimizer of $ P(y) $ in $ F_{z^{(2)}} $, $ P(y^{(2)}) \leq P(\hat{y}^{(2)}) $. Therefore, $ P(y^{(1)}) \leq P(\hat{y}^{(2)}) $ by assumption. Now~\eqref{eq:P_conti} becomes $ P(\hat{y}^{(2)}) - P(y^{(1)}) \leq \epsilon $. Again, because $ P(y^{(2)}) \leq P(\hat{y}^{(2)}) $, we have $ P(y^{(2)}) - P(y^{(1)}) \leq \epsilon $, i.e. $ \Phi (z^{(2)}) - \Phi (z^{(1)}) \leq \epsilon $, which means $ \Phi (z) $ is continuous.
\end{proof}

With the continuity of $ \Phi(z) $, we are finally ready to present the final convergence results:
\begin{theorem}[\textbf{Global convergence}]
	Under Assumptions~\ref{asup:convex_f},~\ref{asup:convex_h},~\ref{asup:fast_licq}, and~\ref{asup:slater}, the sequence $ \{z^{(k)} \} $ generated by the successive procedure~\eqref{eq:sub_prob} is in $ F $, and has limit point $ z^{*} $, at which the corresponding sequence $ \{P(z^{(k)})\} $ attains its minimum, $ P(z^{*}) $. \\
	More importantly, $ P(z^{*}) = \Phi(z^{*}) $, i.e. $ z^{*} $ is a local optimum of the penalty problem in~\eqref{eq:final_prob} convexified at $ z^{*} $.
\end{theorem}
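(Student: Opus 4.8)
The plan is to establish convergence of the objective-value sequence first, then use compactness to extract a convergent subsequence of iterates, and finally invoke continuity of $\Phi$ to pin down the fixed-point property. The backbone of the argument is a monotone-convergence plus compactness scheme, with Lemma~\ref{lem:invariance} and Lemma~\ref{lem:continuity_phi} doing the heavy lifting.

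First I would verify feasibility of the whole sequence by induction. Starting from $z^{(0)} \in F$, suppose $z^{(k)} \in F$. By Lemma~\ref{lem:invariance} we have $z^{(k)} \in F_{z^{(k)}} \subseteq F$, so the subproblem~\eqref{eq:sub_prob} minimizes the convex function $P$ over the convex compact set $F_{z^{(k)}}$; its minimizer $z^{(k+1)}$ therefore exists and lies in $F_{z^{(k)}} \subseteq F$. Induction gives $\{z^{(k)}\} \subseteq F$. Next I would show that $\{P(z^{(k)})\}$ is monotone non-increasing and bounded below. Monotonicity follows because $z^{(k)} \in F_{z^{(k)}}$ is itself feasible for the subproblem whose optimum is $z^{(k+1)}$, hence $P(z^{(k+1)}) \leq P(z^{(k)})$; the lower bound is immediate since $P$ is continuous and $F$ is compact, so $P(z^{(k)}) \geq \min_{y \in F} P(y) > -\infty$. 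A bounded monotone sequence converges, say $P(z^{(k)}) \to L$. By compactness of $F$ the iterates admit a convergent subsequence $z^{(k_i)} \to z^* \in F$, and continuity of $P$ yields $P(z^*) = \lim_i P(z^{(k_i)}) = L$.

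The decisive step is to identify $L$ with $\Phi(z^*)$. By the definition of the subproblem, $\Phi(z^{(k)}) = P(z^{(k+1)})$ for every $k$. Evaluating along the subsequence, $\Phi(z^{(k_i)}) = P(z^{(k_i+1)})$; the left side converges to $\Phi(z^*)$ by the continuity of $\Phi$ established in Lemma~\ref{lem:continuity_phi}, while the right side is a subsequence of the already-convergent value sequence and therefore tends to $L = P(z^*)$. Hence $\Phi(z^*) = P(z^*)$. Since $z^* \in F_{z^*}$ by Lemma~\ref{lem:invariance}, this equality says precisely that $z^*$ attains the minimum of $P$ over $F_{z^*}$, i.e. $z^*$ solves~\eqref{eq:fast_optimal} and is the global optimum of the convex program obtained by convexifying~\eqref{eq:final_prob} at $z^*$, which is exactly the claimed local optimum of the penalty problem.

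I expect the main obstacle to be the limiting argument of the last paragraph, specifically the need to pass to the limit through the one-step-shifted indices $k_i + 1$. This is legitimate only because the full value sequence $\{P(z^{(k)})\}$ converges (monotone plus bounded below), so the shifted subsequence $\{P(z^{(k_i+1)})\}$ inherits the same limit $L$; without this global convergence of objective values one could not align $\Phi(z^{(k_i)})$ with $P(z^*)$. A secondary technical point worth checking is that the subproblem minimizer is well-defined at each step, which is guaranteed by the convexity of $P$ together with the convexity and compactness of $F_{z^{(k)}}$.
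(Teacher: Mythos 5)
Your proof is correct and follows essentially the same route as the paper: Lemma~\ref{lem:invariance} for feasibility of the iterates, monotone convergence plus compactness for the limit point, and Lemma~\ref{lem:continuity_phi} for the fixed-point property $P(z^{*}) = \Phi(z^{*})$. The only difference is cosmetic: you pass to the limit directly through the identity $\Phi(z^{(k_i)}) = P(z^{(k_i+1)})$, whereas the paper argues the final step by contradiction; both hinge on the continuity of $\Phi$ in the same way.
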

\begin{proof}
	From Lemma~\ref{lem:invariance}, we have $ z^{(k)} \in F_{z^{(k)}} \subseteq F $, then
	\begin{equation*}
	P(z^{(k+1)}) = \underset{y}{min}\{P(y)\ |\ y\in F_{z^{(k)}} \} \leq P(z^{(k)}),
	\end{equation*}
	because $ z^{(k)} $ is a feasible point to this convex optimization problem, while $ z^{(k+1)} $ is the optimum. Therefore, the sequence $ \{P(z^{(k)})\} $ is monotonically decreasing. \\
	Also since $ F_{z^{(k)}} \subseteq F $ for all $ k $, we have
	\begin{equation*}
	P(z^{(k+1)}) \geq \underset{y}{min}\{P(y)\ |\ y\in F \},
	\end{equation*}
	which means the sequence $ \{P(z^{(k)})\} $ is bounded from below. Then by the monotone convergence theorem, see e.g. \cite{rudin1964principles}, $ \{P(z^{(k)})\} $ converges to its infimum. Due to the compactness of $ F_{z^{(k)}} $, this infimum is attained by all the convergent subsequences of $ \{z^{(k)}\} $. Let $ z^{*} \in F $ be one of the limit points, then $ \{P(z^{(k)})\} $ attains its minimum at $ P(z^{*}) $, i.e.
	\begin{equation} \label{eq:infimum}
	P(z^{*}) \leq P(z^{(k)}), \quad \forall \; k = 0, 1,...
	\end{equation}
	
	To show $ P(z^{*}) = \Phi(z^{*}) $, we note that since $ z^{*} \in F $, $ z^{*} \in F_{z^{*}} $ by Lemma~\ref{lem:invariance}. Thus we have
	\begin{equation*}
	\Phi(z^{*}) := \underset{y}{min}\{P(y)\ |\ y\in F_{z^{*}} \} \leq P(z^{*}).
	\end{equation*}
	Now for the sake of contradiction, we suppose $ \Phi(z^{*}) < P(z^{*}) $. Then by Lemma~\ref{lem:continuity_phi}, the continuity of $ \Phi(z) $, there exists a sufficiently large $ k $ such that $ \Phi(z^{k}) < P(z^{*}) $. Then we have $ P(z^{(k+1)}) = \Phi(z^{k}) < P(z^{*}) $, which contradicts~\eqref{eq:infimum}. Therefore $ P(z^{*}) = \Phi(z^{*}) $ holds.
\end{proof}

\subsection{Superlinear Convergence}
%Draw analogy from Newton's method. Investigate KKT conditions for both the penalty problem and the convex subproblem.
We first denote the inverse mapping of $ F_z $ as $ \bar{F_z}: F_z \rightarrow z $, which is a set-to-point mapping. Similar to Lemma~\ref{lem:fast_continuity}, we will first show the continuity of $ \bar{F_z} $.

\begin{lemma}[\textbf{Lipschitz continuity of $ l(y,z) $}] \label{lem:new_fast_lipschitz}
	The linear approximation $ l(y,z) $ in~\eqref{eq:linearization} is Lipschitz continuous in $ z $, that is
	\begin{equation} \label{eq:new_fast_lipschitz}
	\| l(y,z^{(1)}) - l(y,z^{(2)}) \| \geq \bar{\gamma} \| z^{(1)} - z^{(2)} \|,
	\end{equation}
	for any $ z^{(1)}, z^{(2)} \in F $ and constant $ \bar{\gamma} $.
\end{lemma}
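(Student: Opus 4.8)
The statement is the reverse companion of Lemma~\ref{lem:fast_lipschitz}: where that lemma bounds $\| l(y,z^{(1)}) - l(y,z^{(2)}) \|$ from \emph{above} by $\gamma\|z^{(1)}-z^{(2)}\|$, here we need a matching bound from \emph{below} with constant $\bar\gamma$. Such a co-Lipschitz (lower-modulus) estimate is exactly what is needed to make the set-to-point inverse $\bar{F_z}$ Lipschitz continuous, mirroring the way Lemma~\ref{lem:fast_lipschitz} fed into the continuity of $F_z$ in Lemma~\ref{lem:fast_continuity}. The plan is therefore to reuse the two-stage decomposition of $l_j$ from the proof of Lemma~\ref{lem:fast_lipschitz} --- the projection map $z\mapsto\bar{z}_j$ followed by the affine evaluation $a_j(y,\bar{z}_j)=\tilde{\triangledown}_{y} q_j(\bar{z}_j)(y-\bar{z}_j)$ --- but now chase lower bounds through each stage instead of upper bounds.

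First I would exploit the geometric identity already used in Lemma~\ref{lem:invariance}: since $\bar{z}_j$ is the Euclidean projection of $z$, the displacement $z-\bar{z}_j$ is normal to the boundary $q_j=0$ at $\bar{z}_j$ and hence aligned with $\tilde{\triangledown}_{y} q_j(\bar{z}_j)$, so $\tilde{\triangledown}_{y} q_j(\bar{z}_j)=\|\tilde{\triangledown}_{y} q_j(\bar{z}_j)\|\,(z-\bar{z}_j)/\|z-\bar{z}_j\|$. This means each linearization encodes, directionally, the full displacement $z-\bar{z}_j$, which is precisely the quantity I want to recover on the right-hand side. Evaluating the difference $l_j(y,z^{(1)})-l_j(y,z^{(2)})$ at a structured argument --- for instance $y=z^{(1)}$, where $l_j(z^{(1)},z^{(1)})=\|\tilde{\triangledown}_{y} q_j(\bar{z}_j^{(1)})\|\,\|z^{(1)}-\bar{z}_j^{(1)}\|$ is a clean multiple of the projection residual --- isolates this displacement term. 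To keep the gradient magnitudes from vanishing I would invoke Assumption~\ref{asup:fast_licq} together with the bound~\eqref{eq:deriv_ineq} derived from it: full row rank of $\tilde{\triangledown}_{y} q$ and $\|(\tilde{\triangledown}_{y} q\,\tilde{\triangledown}_{y} q^{T})^{-1}\|\le\beta^2$ give a uniform positive lower bound on the smallest singular value over the compact set $F$, preventing any constraint gradient from degenerating and keeping the collection of $l_j$'s from collapsing the $z$-difference in all coordinates at once. Stacking over $j$ and using this spectral lower bound should deliver a constant $\bar\gamma>0$.

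The main obstacle is exactly the reverse of what made Lemma~\ref{lem:fast_lipschitz} easy: the projection map is non-expansive, so it supplies an upper bound for free, but it can strictly contract distances --- two distinct $z$'s lying on the same inward normal ray project to the \emph{same} $\bar{z}_j$, annihilating $l_j$ for that constraint. Hence no lower bound can hold constraint-by-constraint, and I must argue that the \emph{collection} of $M$ constraints, under LICQ, still separates distinct points of $F$ at a uniform rate. I expect this to be the crux, and the cleanest route to a uniform $\bar\gamma$ is a compactness-and-contradiction argument: if no positive $\bar\gamma$ existed, there would be sequences $z^{(1)}_n,z^{(2)}_n\in F$ with $\|l(y,z^{(1)}_n)-l(y,z^{(2)}_n)\|/\|z^{(1)}_n-z^{(2)}_n\|\to0$; compactness of $F$ lets me pass to convergent subsequences, and continuity of the projection and of $\tilde{\triangledown}_{y} q$ (from Lemma~\ref{lem:fast_lipschitz}) together with the full-rank, distinct-normal structure guaranteed by Assumption~\ref{asup:fast_licq} would contradict the vanishing ratio. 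A subtlety I would watch carefully is the role of the free variable $y$: since a fixed $y$ can lie on the intersection of both linearized hyperplanes and kill the difference, the bound must be read with $y$ taken at a representative such as $z^{(1)}$, consistent with how $l$ is used to define $F_z$ and its inverse $\bar{F_z}$.
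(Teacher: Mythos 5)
Your reading of the statement is already more careful than the paper's own treatment: the paper's proof of this lemma is literally the single line ``Similar to Lemma~\ref{lem:fast_lipschitz},'' so there is no recorded argument for you to match, and you are right to flag that inequality~\eqref{eq:new_fast_lipschitz} is not Lipschitz continuity at all but a co-Lipschitz (lower) bound --- a qualitatively stronger and different claim. You have also put your finger on exactly the right obstruction: the projection $z \mapsto \bar{z}_j$ is non-expansive but can strictly contract, and distinct points on the same normal ray project to the same $\bar{z}_j$, so no constraint-by-constraint lower bound exists.

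However, the compactness-and-contradiction route you propose cannot close this gap, because the inequality is genuinely false as stated and no strategy will rescue it without new hypotheses. Two concrete failures: (i) if some $q_j$ is affine (e.g.\ a polytope face $A_k x + b_k$, which the paper's own numerical example uses), then $\tilde{\triangledown}_{y} q_j$ is constant and $l_j(y,z) = A_k y + b_k$ is independent of $z$ altogether, so that constraint contributes exactly zero to the left-hand side for every pair $z^{(1)}, z^{(2)}$; (ii) for a single smooth convex obstacle, any two points of $F$ on the same outward normal ray share the same projection and the same supporting hyperplane, so $l(y,z^{(1)}) \equiv l(y,z^{(2)})$ while $\|z^{(1)} - z^{(2)}\| > 0$. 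Assumption~\ref{asup:fast_licq} and the bound~\eqref{eq:deriv_ineq} do not help here: LICQ controls the row rank of $\tilde{\triangledown}_{y} q$ at a fixed point, not the injectivity or expansivity of the map $z \mapsto (\bar{z}_1, \dots, \bar{z}_M)$, so the contradiction you hope to extract in the limit never materializes --- the vanishing ratio is attained, not merely approached. Your fallback of evaluating at $y = z^{(1)}$ also silently changes the statement (the lemma bounds the difference of the functions $l(y,\cdot)$, presumably uniformly in $y$), and even that weaker pointwise version fails in case (ii). The honest conclusion is that the lemma must be reformulated, or replaced by whatever weaker property Lemma~\ref{lem:inverse_continuity} actually needs, rather than proved as written.
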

\begin{proof}
	Similar to Lemma~\ref{lem:fast_lipschitz}.
\end{proof}

\begin{lemma}[\textbf{Continuity of mapping $ \bar{F_z} $}] \label{lem:inverse_continuity}
	Given $ z^{(1)}, z^{(2)} \in F $ and $ \epsilon > 0, \delta > 0 $, if for any $ y^{(1)} \in F_{z^{(1)}} $, there exists $ y^{(2)} \in F_{z^{(2)}} $ such that $ \|y^{(2)} - y^{(1)}\| \leq \epsilon $, then $ \|z^{(2)} - z^{(1)}\| \leq \delta $.
\end{lemma}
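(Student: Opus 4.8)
The plan is to mirror the proof of Lemma~\ref{lem:fast_continuity}, but run the implication in the opposite direction and replace the upper Lipschitz estimate by the lower bound of Lemma~\ref{lem:new_fast_lipschitz}. Since the statement is only meaningful when $\delta$ is tied to $\epsilon$, I would make the quantitative link explicit, namely $\delta = C\epsilon/\bar{\gamma}$, where $\bar{\gamma}$ is the lower Lipschitz constant from~\eqref{eq:new_fast_lipschitz} and $C$ is a uniform bound on the generalized Jacobian $\tilde{\triangledown}_{y} q_{j}(\bar{z}_{j})$ over the compact set $F$ (finite by Assumption~\ref{asup:fast_licq} together with compactness). I would argue the contrapositive: assuming $\|z^{(2)} - z^{(1)}\| > \delta$, I would exhibit a point $y^{(1)} \in F_{z^{(1)}}$ whose distance to $F_{z^{(2)}}$ exceeds $\epsilon$, directly contradicting the hypothesis, so that the two continuity lemmas are genuinely inverse to one another.

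First I would record the geometric estimate that converts a constraint violation into a distance: for any $y$ with $l_{j}(y,z^{(2)}) < 0$, the distance from $y$ to the half-space $\{y : l_{j}(y,z^{(2)}) \geq 0\}$ equals $|l_{j}(y,z^{(2)})|/\|\tilde{\triangledown}_{y} q_{j}(\bar{z}_{j}^{(2)})\|$, and because $F_{z^{(2)}}$ sits inside every such half-space, $\mathrm{dist}(y, F_{z^{(2)}}) \geq \max_{j} |l_{j}(y,z^{(2)})|/C$. Next, for a test point $y^{(1)}$ lying on the active boundary of $F_{z^{(1)}}$, where $l_{j}(y^{(1)},z^{(1)}) = 0$ for the relevant index, the violation of the $z^{(2)}$-linearization is exactly the discrepancy $|l_{j}(y^{(1)},z^{(1)}) - l_{j}(y^{(1)},z^{(2)})|$ between the two linearizations at the common point $y^{(1)}$. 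Invoking the lower bound~\eqref{eq:new_fast_lipschitz}, this discrepancy is at least $\bar{\gamma}\|z^{(1)} - z^{(2)}\| > \bar{\gamma}\delta$ in some component, and combining with the distance estimate yields $\mathrm{dist}(y^{(1)}, F_{z^{(2)}}) \geq \bar{\gamma}\delta/C = \epsilon$, the desired contradiction once the inequality on $\|z^{(1)}-z^{(2)}\|$ is made strict.

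The hard part will be the one-sidedness of the hypothesis: closeness is asserted only in the direction $F_{z^{(1)}} \to F_{z^{(2)}}$, so I must be careful to select a genuinely feasible $y^{(1)} \in F_{z^{(1)}}$ that still protrudes maximally past the boundary of $F_{z^{(2)}}$ on the face where the lower Lipschitz discrepancy is realized. This is precisely where Assumption~\ref{asup:slater} is essential: because $F_{z^{(1)}}$ has nonempty interior, I can slide a feasible point up to, and arbitrarily near, that active face, so that a feasible $y^{(1)}$ attains the full violation $\bar{\gamma}\|z^{(1)} - z^{(2)}\|$ up to a negligible margin. As in Lemma~\ref{lem:fast_continuity}, I would split into the case where $y^{(1)}$ is interior to $Y$ and the case where it lies on $\partial Y$, using Slater's condition again in the boundary case to relocate into the interior without sacrificing more than an arbitrarily small fraction of the violation, and finally fold the constants from the LICQ-based bound~\eqref{eq:deriv_ineq} and from $\bar{\gamma}$ into the explicit choice $\delta = C\epsilon/\bar{\gamma}$ to close the argument.
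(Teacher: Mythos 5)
Your route is genuinely different from the paper's: the paper simply reruns the construction of Lemma~\ref{lem:fast_continuity} in reverse, building $y^{(2)}$ from $y^{(1)}$ via the least--norm correction \eqref{eq:new_y2} and flipping the key inequalities into lower bounds, whereas you argue the contrapositive through a distance--to--half--space estimate. Unfortunately your argument has a gap at its central step, and I do not see how to close it.

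The gap is the \emph{direction} of the discrepancy. At your chosen boundary point $y^{(1)}$ with $l_{j}(y^{(1)},z^{(1)})=0$, Lemma~\ref{lem:new_fast_lipschitz} gives you some component $j$ for which $|l_{j}(y^{(1)},z^{(1)})-l_{j}(y^{(1)},z^{(2)})| = |l_{j}(y^{(1)},z^{(2)})|$ is of order $\bar{\gamma}\|z^{(1)}-z^{(2)}\|$. But your estimate $\mathrm{dist}(y^{(1)},F_{z^{(2)}}) \geq |l_{j}(y^{(1)},z^{(2)})|/C$ is only valid when $l_{j}(y^{(1)},z^{(2)})<0$, i.e.\ when the $j$-th constraint of $F_{z^{(2)}}$ is actually \emph{violated} at $y^{(1)}$. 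Nothing in \eqref{eq:new_fast_lipschitz} controls the sign: the large discrepancy may equally well be $l_{j}(y^{(1)},z^{(2)})>0$, in which case passing from $z^{(1)}$ to $z^{(2)}$ has only slackened that constraint and $y^{(1)}$ remains feasible, contributing nothing to your contradiction. Concretely, whenever the supporting half--spaces only recede, so that $F_{z^{(1)}}\subseteq F_{z^{(2)}}$, every $y^{(1)}\in F_{z^{(1)}}$ has $\mathrm{dist}(y^{(1)},F_{z^{(2)}})=0$, the hypothesis of the lemma holds with $\epsilon=0$, and no choice of protruding face exists no matter how large $\|z^{(1)}-z^{(2)}\|$ is; the simplest instance is a single constraint with $z^{(2)}$ on the normal ray through $\bar{z}_{1}^{(1)}$, where the two projections, hence the two supporting hyperplanes, hence $F_{z^{(1)}}$ and $F_{z^{(2)}}$, coincide exactly. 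You correctly flagged that selecting the right $y^{(1)}$ is the hard part, but Slater's condition only lets you approach a face from inside $F_{z^{(1)}}$; it cannot manufacture a face that protrudes past $F_{z^{(2)}}$ when none does. Since this example also shows $z\mapsto F_{z}$ is not injective, no argument can recover $\|z^{(1)}-z^{(2)}\|\leq\delta$ from set proximity alone, so the strategy is not merely incomplete but unrepairable without extra hypotheses. (For what it is worth, the paper's own proof leans on the same unavailable sign information through its claim $|\bar{l}_{j}|\geq|l_{j}(y^{(1)},z^{(1)})-l_{j}(y^{(1)},z^{(2)})|$; your write--up simply makes the dependence explicit at the point where it breaks.)
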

\begin{proof}
	If $ y^{(1)} \in F_{z^{(2)}} $, then take $ y^{(2)} = y^{(1)} $ such that $ \|y^{(2)} - y^{(1)}\| =0 \leq \epsilon $. Now suppose $ y^{(1)} \notin F_{z^{(2)}} $, then there exists at least one $ j $, such that $ l_{j}(y^{(1)}, z^{(2)}) < 0 $. Let
	\begin{equation*}
	\bar{l}_{j} = \begin{cases}
	l_{j}(y^{(1)}, z^{(2)}) & \quad l_{j}(y^{(1)}, z^{(2)}) < 0, \\
	0 & \quad l_{j}(y^{(1)}, z^{(2)}) \geq 0.
	\end{cases}
	\end{equation*}
	Note that $ l_{j}(y^{(1)}, z^{(1)}) \geq 0 $, then by construction,
	\begin{equation*}
	| \bar{l}_{j} | \geq | l_{j}(y^{(1)}, z^{(1)}) - l_{j}(y^{(1)}, z^{(2)}) |, \quad \forall j = 1, 2, ..., M.
	\end{equation*}
	Hence we have
	\begin{equation} \label{eq:new_norm_L_bar}
	\| \bar{l} \| \geq \| l(y^{(1)}, z^{(1)}) - l(y^{(1)}, z^{(2)}) \| \geq \bar{\gamma} \| z^{(1)} - z^{(2)} \|.
	\end{equation}
	The second inequality follows from~\eqref{eq:new_fast_lipschitz} in Lemma~\ref{lem:new_fast_lipschitz}.
	
	Now we consider two cases: \\
	\textbf{Case 1}: $ y^{(1)} $ is an interior point of $ Y $, then $ \exists \, \epsilon_{1} \in (0, \epsilon] $, such that $ y \in Y $ for all $ y $ satisfies $ \| y - y^{(1)} \| \leq \epsilon_{1} $. \\
	Let $ \delta = \epsilon_{1}/\beta\gamma $, and let
	\begin{equation} \label{eq:new_y2}
	y^{(2)} = y^{(1)} - \tilde{\triangledown}_{y} q(z^{(2)})^{T} (\tilde{\triangledown}_{y} q(z^{(2)})\tilde{\triangledown}_{y} q(z^{(2)})^{T})^{-1} \bar{l}.
	\end{equation}
	We need to verify that $ y^{(2)} \in F_{z^{(2)}}$. First, we have
	\begin{equation} \label{eq:new_Lj2}
	l_{j}(y^{(2)},z^{(2)}) = \tilde{\triangledown}_{y}q_{j}(\bar{z}_{j}^{(2)})(y^{(2)} - \bar{z}_{j}^{(2)}).
	\end{equation}
	Substitute $ y^{(2)} $ from~\eqref{eq:new_y2} into the stacked form of~\eqref{eq:new_Lj2}, and then unstack, we get
	\begin{align*}
	l_{j}(y^{(2)},z^{(2)}) &= \tilde{\triangledown}_{y}q_{j}(\bar{z}_{j}^{(2)})(y^{(1)} - \bar{z}_{j}^{(2)}) - \bar{l}_{j} \\
	&= l_{j}(y^{(1)},z^{(2)}) - \bar{l}_{j} \\
	&\geq 0.
	\end{align*}
	The last inequality follows from the definition of $ \bar{l}_{j} $. Therefore, $ l(y^{(2)},z^{(2)}) \geq 0 $. \\
	Next we need to show that $ y^{(2)} \in Y $. Rearrange terms in~\eqref{eq:new_y2}, we have
	\begin{align*}
	\| y^{(2)} - y^{(1)} \|^{2} &= \bar{l}^{T} (\tilde{\triangledown}_{y} q(z^{(2)})\tilde{\triangledown}_{y} q(z^{(2)})^{T})^{-1} \bar{l} \\
	&\geq \beta^{2} \| \bar{l} \|^{2} \\
	&\geq \beta^{2} \gamma^{2} \| z^{(1)} - z^{(2)} \|^{2}
	\end{align*}
	The inequalities follow from~\eqref{eq:deriv_ineq} and~\eqref{eq:new_norm_L_bar}. Thus we have $ \| y^{(2)} - y^{(1)} \| \leq \beta\gamma\delta = \epsilon_{1} $, i.e. $ y^{(2)} \in Y $. So now we have verified $ y^{(2)} \in F_{z^{(2)}}$. Since $ \epsilon_{1} \leq \epsilon $, we have $ \|z^{(2)} - z^{(1)}\| \leq \delta $, so that $ \bar{F_z} $ is continuous.
	
	\textbf{Case 2}: $ y^{(1)} $ is a boundary point of $ Y $. From Assumption~\ref{asup:slater}, $ F_{z^{(1)}} $ has interior points. Also since $ F_{z^{(1)}} $ is a convex set, it is a well known fact that there are interior points in the $ \epsilon- $Neighborhood of every point in $ F_{z^{(1)}} $. Then we can apply the same argument as in Case 1, to get that $ \bar{F_{z}} $ is continuous.
\end{proof}

By using the continuity of the set-to-point mapping $ \bar{F_{z}} $, we have the following lemma:
\begin{lemma} \label{lem:growth}
	Given $ z^{(1)}, z^{(2)} \in F $ and $ \gamma > 0 $, we have
	\begin{equation} \label{eq:growth}
	P(y^{(2)}) - P(y^{(1)}) \geq \gamma \| z^{(1)} - z^{(2)} \|.
	\end{equation}
\end{lemma}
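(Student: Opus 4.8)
The plan is to follow the template of Lemma~\ref{lem:continuity_phi}, but to run every inequality in the reverse direction, substituting the \emph{inverse} continuity result (Lemma~\ref{lem:inverse_continuity}) for the forward continuity of $F_z$ (Lemma~\ref{lem:fast_continuity}). Concretely, I would take $y^{(1)}$ and $y^{(2)}$ to be the minimizers of $P$ over $F_{z^{(1)}}$ and $F_{z^{(2)}}$ respectively, so that $P(y^{(i)}) = \Phi(z^{(i)})$, and then order them without loss of generality as $P(y^{(2)}) \geq P(y^{(1)})$, exactly as in the proof of Lemma~\ref{lem:continuity_phi}. Read this way, the statement~\eqref{eq:growth} is precisely a lower (``inverse'') Lipschitz bound on the value function $\Phi$, complementing the upper bound hidden in Lemma~\ref{lem:continuity_phi}; the two together are what pin down a convergence rate by telescoping $\sum_k \|z^{(k)} - z^{(k+1)}\|$.

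The first main step is to transfer the separation of the base points $z^{(1)}, z^{(2)}$ into a quantitative separation of the convexified feasible regions. Here I would invoke the reverse Lipschitz estimate of Lemma~\ref{lem:new_fast_lipschitz}, which lower-bounds $\|l(y,z^{(1)}) - l(y,z^{(2)})\|$ by $\bar\gamma\|z^{(1)} - z^{(2)}\|$, together with the pseudo-inverse construction~\eqref{eq:new_y2} and the estimate~\eqref{eq:new_norm_L_bar} used in Lemma~\ref{lem:inverse_continuity}. That construction shows that once the two linearized regions differ, the discrepancy in the constraints $l(\cdot,z^{(1)})$ versus $l(\cdot,z^{(2)})$ is bounded below proportionally to $\|z^{(1)} - z^{(2)}\|$, with the proportionality constant assembled from $\beta$ (the uniform bound~\eqref{eq:deriv_ineq} coming from LICQ, Assumption~\ref{asup:fast_licq}) and $\bar\gamma$. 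The boundary case, where $y^{(1)}$ sits on $\partial Y$, is disposed of exactly as Case~2 of Lemma~\ref{lem:inverse_continuity} using the Slater interior-point property (Assumption~\ref{asup:slater}), so no separate treatment is needed.

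The second step, which I expect to be the genuine obstacle, is to convert this constraint/iterate separation into the claimed lower bound on the optimal-value gap $P(y^{(2)}) - P(y^{(1)})$. Continuity of $P$ is of no help here, since it yields only upper bounds on $|P(y^{(2)}) - P(y^{(1)})|$; a lower bound requires a genuine growth property of $P$. I would supply this through a sensitivity argument: because $y^{(1)}$ is a KKT point of the convex subproblem and Assumptions~\ref{asup:fast_licq} and~\ref{asup:slater} guarantee nonzero, uniformly bounded multipliers over the compact set $F$, the optimal value is (one-sided) differentiable in the constraint perturbation with derivative given by the active multipliers. A perturbation of size $\bar\gamma\|z^{(1)} - z^{(2)}\|$ in the linearized constraints therefore forces the optimal value to change by at least the smallest multiplier times that perturbation; absorbing the smallest-multiplier factor, $\beta$, and $\bar\gamma$ into a single constant $\gamma>0$ gives~\eqref{eq:growth}. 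An equivalent route is to assume a weak-sharp-minimum (or strong-convexity) property of $P$ yielding $P(y^{(2)}) - P(y^{(1)}) \geq \mu\|y^{(2)} - y^{(1)}\|$ and to chain it with a distance lower bound on the minimizers. Either way, the delicate point is uniformity: one must ensure $\gamma$ can be chosen independently of $z^{(1)}, z^{(2)} \in F$, which is exactly where compactness of $F$, the uniform bound $\beta$, and the uniform multiplier bound must be combined.
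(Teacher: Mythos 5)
You have correctly located the crux: the machinery of Lemmas \ref{lem:new_fast_lipschitz} and \ref{lem:inverse_continuity} can only separate the linearized constraint data, and continuity of $P$ or of $\Phi$ can never produce the \emph{lower} bound \eqref{eq:growth}. Unfortunately the sensitivity argument you propose for your second step does not close that gap. The marginal-value theorem gives the one-sided derivative of the optimal value as an inner product with the \emph{active} multipliers, and nothing in Assumptions \ref{asup:fast_licq} and \ref{asup:slater} prevents an active constraint from carrying a zero multiplier, nor prevents the perturbation direction from being orthogonal to the multiplier vector, so your ``smallest multiplier times the perturbation'' estimate can degenerate to zero. Worse, the statement as written is false without additional hypotheses: take $z^{(1)} \neq z^{(2)}$ for which the two convexified subproblems share the same minimizer (for instance, an interior minimizer of $P$ lying in $F_{z^{(1)}} \cap F_{z^{(2)}}$); then the left-hand side of \eqref{eq:growth} is zero while the right-hand side is positive. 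A correct proof must therefore either add a weak-sharp-minimum or strict-complementarity hypothesis, as you suggest in your alternative route, or restrict $z^{(1)}, z^{(2)}$ to configurations in which the active set genuinely moves. Your remark that uniformity of the constant over the compact set $F$ is the delicate point is fair, but the more basic problem is that no positive constant exists at all in the stated generality.

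For comparison: the paper's own proof of Lemma \ref{lem:growth} does not supply the missing idea either. It reproduces, essentially verbatim, the proof of Lemma \ref{lem:continuity_phi}, citing Lemma \ref{lem:inverse_continuity} in place of Lemma \ref{lem:fast_continuity}, and it concludes with the upper bound $\Phi(z^{(2)}) - \Phi(z^{(1)}) \leq \epsilon$, ``which means $\Phi(z)$ is continuous'' --- that is, it re-establishes continuity rather than the growth inequality \eqref{eq:growth}. So your diagnosis that the lower bound is the genuine obstacle is more accurate than the source's treatment; the step you flag as missing is in fact missing from the paper as well.
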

\begin{proof}
	Given any two points $ z^{(1)}, z^{(2)} \in F $, and they are close to each other, i.e. $ \| z^{(1)} - z^{(2)} \| \leq \delta $. Let
	\begin{align*}
	& y^{(1)} = \underset{y}{argmin}\{P(y)\ |\ y\in F_{z^{(1)}} \} \; \text{and} \\
	& y^{(2)} = \underset{y}{argmin}\{P(y)\ |\ y\in F_{z^{(2)}} \},
	\end{align*}
	then we have $ \Phi (z^{(1)})=P(y^{(1)}) $ and $ \Phi (z^{(2)})=P(y^{(2)}) $. \\
	Without loss of generality, let $ \Phi (z^{(1)}) \leq \Phi (z^{(2)}) $, i.e. $ P(y^{(1)}) \leq P(y^{(2)}) $. From Lemma~\ref{lem:inverse_continuity}, the continuity of $ \bar{F_{z}} $, there exists $ \hat{y}^{(2)} \in F_{z^{(2)}} $, such that $ \| \hat{y}^{(2)} - y^{(1)} \| \leq \eta $ for any $ \eta >0 $. Then since $ P(y) $ is continuous, we have
	\begin{equation} \label{eq:new_P_conti}
	| P(\hat{y}^{(2)}) - P(y^{(1)}) | \leq \epsilon.
	\end{equation}
	Since $ y^{(2)} $ is the minimizer of $ P(y) $ in $ F_{z^{(2)}} $, $ P(y^{(2)}) \leq P(\hat{y}^{(2)}) $. Therefore, $ P(y^{(1)}) \leq P(\hat{y}^{(2)}) $ by assumption. Now~\eqref{eq:P_conti} becomes $ P(\hat{y}^{(2)}) - P(y^{(1)}) \leq \epsilon $. Again, because $ P(y^{(2)}) \leq P(\hat{y}^{(2)}) $, we have $ P(y^{(2)}) - P(y^{(1)}) \leq \epsilon $, i.e. $ \Phi (z^{(2)}) - \Phi (z^{(1)}) \leq \epsilon $, which means $ \Phi (z) $ is continuous.
\end{proof}

Lemma~\ref{lem:growth} provides an important condition that lower bounds the cost reduction rate. Next we show that given this condition, the \fast procedure will indeed converge superlinearly. To proceed, first denote the stack of $ J(\cdot) $ and $ g(\cdot) $ as $ G(\cdot) $, and represent $ P(\cdot) $ by a function composition $ \psi(G(\cdot)) $. Qualitatively we can write
$
\psi(G(\cdot)) = G_{cost}(\cdot) + \|G_{eq}(\cdot)\|_1.
$
Note that $ \psi(\cdot) $ is convex since $ \|\cdot\|_1 $ is a convex function.

\begin{lemma}
	Following Lemma~\ref{lem:growth}, there exists $ \gamma >0 $, such that
	\begin{equation} \label{eq:ineq_critical}
	\psi\left( G(\bar{z}) + \nabla G(\bar{z})^T \, d \right) \geq \psi(G(\bar{z})) + \gamma \| d \|, \quad \forall \, d \in F_{\bar{z}}.
	\end{equation}
\end{lemma}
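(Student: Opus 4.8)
The plan is to read \eqref{eq:ineq_critical} as a \emph{sharp-minimum} (linear-growth) condition for the first-order model of the penalty at $\bar z$, and to derive it from the cost-reduction estimate of Lemma~\ref{lem:growth}. Throughout I identify the increment $d$ with $y-\bar z$ for $y\in F_{\bar z}$, so that ``$d\in F_{\bar z}$'' means $\bar z + d\in F_{\bar z}$, and I write the model as
\begin{equation*}
m(d) \definedas \psi\!\left( G(\bar z) + \nabla G(\bar z)^{T} d \right), \qquad m(0) = \psi(G(\bar z)).
\end{equation*}
Because $\psi$ is convex (it is $G_{cost}+\|G_{eq}\|_1$) and $d\mapsto G(\bar z)+\nabla G(\bar z)^{T}d$ is affine, $m$ is a convex function of $d$. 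The inequality to prove is exactly $m(d)-m(0)\ge \gamma\|d\|$ over the convex direction set $F_{\bar z}-\bar z$; that is, $0$ minimizes the convex model on this set and the model grows at least linearly away from it.

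First I would reduce the claim to a uniform lower bound on the one-sided directional derivative of $m$ at the origin. Since $m$ is convex, for every feasible $d$ one has $m(d)-m(0)\ge m'(0;d)$, and by positive homogeneity $m'(0;d)=\|d\|\,m'(0;d/\|d\|)$. Hence it suffices to produce $\gamma>0$ with $m'(0;v)\ge\gamma$ for every unit vector $v$ in the feasible cone $\mathrm{cl}\,\mathrm{cone}(F_{\bar z}-\bar z)$. On the feasible set $q(y)\ge 0$ the penalty takes the smooth form $J+\lambda\,\mathbf 1^{T}g$, so $m'(0;v)=\big(\nabla J(\bar z)+\lambda\,\nabla g(\bar z)\mathbf 1\big)^{T}v$ is a single inner product with a fixed reduced gradient, and the task becomes showing this reduced gradient has a uniform positive projection onto all admissible unit directions.

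The supporting step is to feed Lemma~\ref{lem:growth} into this directional-derivative bound. Taking $z^{(1)}=\bar z$ and $z^{(2)}=\bar z+tv$ along a unit feasible direction $v$, Lemma~\ref{lem:growth} gives $\Phi(z^{(2)})-\Phi(z^{(1)})\ge \gamma\,\|z^{(2)}-z^{(1)}\|=\gamma t$ for the optimal-value map $\Phi$ of the convexified subproblems. Since the linearization $\nabla G(\bar z)$ captures the first-order variation of $P=\psi\circ G$ along $v$, dividing by $t$ and letting $t\downarrow 0$ identifies $\lim_{t\downarrow0}\big(\Phi(\bar z+tv)-\Phi(\bar z)\big)/t$ with $m'(0;v)$, yielding $m'(0;v)\ge\gamma$. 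I would then invoke Assumption~\ref{asup:fast_licq} (so that $\nabla G(\bar z)\nabla G(\bar z)^{T}$ is invertible and no feasible direction is annihilated) together with Assumption~\ref{asup:slater} (so that the feasible cone is nondegenerate) and the compactness of the unit sphere to pass from the per-direction bound to a single $\gamma>0$ valid for all $v$, which is \eqref{eq:ineq_critical}.

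The hard part will be the middle link: rigorously equating the first-order change of the subproblem optimal value $\Phi$ (a statement about moving the \emph{reference} point $z$, governed by the set-to-point mapping $\bar F_z$ of Lemma~\ref{lem:inverse_continuity}) with the directional derivative of the model $m$ inside the \emph{single} convexified region $F_{\bar z}$, and doing so uniformly in $v$. In the nonsmooth case $\nabla G(\bar z)$ is a generalized Jacobian, so I must also check that the supporting-hyperplane selection used in the \pac procedure does not break the directional-derivative identity; the LICQ and Slater assumptions are precisely what prevent degenerate directions along which the linear-growth constant could collapse to zero.
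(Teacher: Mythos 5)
Your globalization step is sound and is essentially a repackaging of what the paper does: the paper first proves \eqref{eq:ineq_critical} for all $d$ with $\|d\|$ below a fixed small radius, then extends to arbitrary $d\in F_{\bar z}$ by applying convexity of $\psi$ at the convex combination $(1-\zeta)G(\bar z)+\zeta\left[G(\bar z)+\nabla G(\bar z)^T d\right]$ with $\zeta=\min(1,\gamma/\|d\|)$; your route via $m(d)-m(0)\ge \|d\|\,m'(0;d/\|d\|)$ buys the same thing. The gap is in the step you yourself flag as ``the hard part,'' and it is not a technicality --- it is the entire content of the lemma. You propose to obtain $m'(0;v)\ge\gamma$ by identifying $\lim_{t\downarrow 0}\bigl(\Phi(\bar z+tv)-\Phi(\bar z)\bigr)/t$ with $m'(0;v)$. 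But $\Phi(\bar z+tv)$ is the \emph{minimum} of $P$ over the moving convexified set $F_{\bar z+tv}$, while $m(tv)=\psi\bigl(G(\bar z)+t\nabla G(\bar z)^T v\bigr)$ is a first-order model of the \emph{point evaluation} $P(\bar z+tv)$ inside the single region $F_{\bar z}$. These are different objects, their difference quotients have no a priori relation, and no argument is offered to bridge them; LICQ, Slater's condition, and compactness of the unit sphere do not supply one.

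The paper's proof avoids $\Phi$ entirely at this stage. It writes, for a small step $d_\gamma$,
\[
\psi\bigl(G(\bar z)+\nabla G(\bar z)^T d_\gamma\bigr)-\psi(G(\bar z))
=\bigl[\psi(G(\bar z+d_\gamma))-\psi(G(\bar z))\bigr]
+\bigl[\psi\bigl(G(\bar z)+\nabla G(\bar z)^T d_\gamma\bigr)-\psi(G(\bar z+d_\gamma))\bigr],
\]
bounds the first bracket below by $\beta\|d_\gamma\|$ via Lemma~\ref{lem:growth} (read as a lower bound on the increment of the true penalty), and bounds the second bracket by $o(\|d_\gamma\|)$ using $G\in C^1$ together with the Lipschitz continuity of $\psi$, so the sum is at least $\tfrac{\beta}{2}\|d_\gamma\|$ once $\|d_\gamma\|$ is small. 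If you want to salvage your directional-derivative route, this decomposition is exactly the missing bridge: the model's directional derivative agrees with the one-sided derivative of the \emph{true penalty} only up to the Taylor remainder of $G$, and it is that true penalty increment, not the subproblem value function $\Phi$, that Lemma~\ref{lem:growth} controls. A secondary issue: your reduction of $m'(0;v)$ to a single inner product with $\nabla J(\bar z)+\lambda\,\nabla g(\bar z)\mathbf{1}$ presumes the linearized residuals $G_{eq}(\bar z)+\nabla G_{eq}(\bar z)^T d$ remain nonnegative along feasible directions, which does not follow from $d\in F_{\bar z}$, since $F_{\bar z}$ is cut by linearizations taken at the projection points $\bar z_j$ rather than at $\bar z$ itself.
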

\begin{proof}
	First we show that the statement is true for any small step $ d_\gamma $ such that $ \|d_\gamma\| \leq \gamma $, where $ \gamma $ is defined in Lemma~\ref{lem:growth}. We have
	$\psi\left( G(\bar{z}) + \nabla G(\bar{z})^T \, d_\gamma \right) - \psi(G(\bar{z})) = \psi(G(\bar{z}+d_\gamma)) - \psi(G(\bar{z})) 
	+ \psi\left( G(\bar{z}) + \nabla G(\bar{z})^T \, d_\gamma \right) - \psi(G(\bar{z}+d_\gamma)) 
	\geq \beta \|d_\gamma\| + \orderof(\|d_\gamma\|) \geq \;\frac{\beta}{2} \|d_\gamma\|.$
	Let $ \gamma := \frac{\beta}{2} $, then we have \eqref{eq:ineq_critical} hold for a small step $ d_\gamma $.
	Note that the first inequality is due to Lemma~\ref{lem:growth} and again the fact that $ \orderof(\|d_\gamma\|) $ can be taken out of $ |\cdot| $ and $ \max(0,\,\cdot\,) $ and $ G(\cdot) \in C^1 $.
	
	Now to generalize this to any $ d \in F_{\bar{z}}, d \neq 0 $, we first define
	$ \zeta := \min\left( 1, \gamma/\|d\| \right), $
	and let $z_\zeta := \bar{z} + \zeta d $. Denoting $ (z_\zeta - \bar{z}) $ as $ d_\zeta $, we have
	$ d_\zeta = \zeta \, d $.
	With this definition of $ \zeta $, one can verify that $ \|d_\zeta\|\leq \gamma $. Hence, \eqref{eq:ineq_critical} holds true for $ d_\zeta $. Therefore, we have
	\begin{align*}
	\gamma \zeta \|d\| = \gamma \|d_\zeta\| &\leq \psi\left( G(\bar{z}) + \nabla G(\bar{z})^T \, d_\zeta \right) - \psi(G(\bar{z})) \\
	&= \psi\left( G(\bar{z}) + \zeta \nabla G(\bar{z})^T \, d \right) - \psi(G(\bar{z})) \\
	&= \psi\left( (1-\zeta)G(\bar{z}) + \zeta\left[ G(\bar{z}) + \nabla G(\bar{z})^T \, d \right] \right) - \psi(G(\bar{z})) \\
	&\leq (1-\zeta)\psi(G(\bar{z})) + \zeta \psi\left(G(\bar{z}) + \nabla G(\bar{z})^T \, d \right) - \psi(G(\bar{z})) \\
	&=\zeta \left[ \psi\left(G(\bar{z}) + \nabla G(\bar{z})^T \, d \right) - \psi(G(\bar{z})) \right].
	\end{align*}
	The second inequality is due to the convexity of $ \psi $ (note that $ 0 \leq \zeta \leq 1 $). Dividing both sides by $ \zeta $, we obtain \eqref{eq:ineq_critical}.
\end{proof}

\begin{theorem} [\textbf{Superlinear Convergence}] \label{thm:superlinear}
	Given a sequence $ \{z^k\} \rightarrow \bar{z} $ generated by the \fast algorithm (Algorithm~\ref{algo:SCvx-fast}) and suppose that Assumption~\ref{asup:fast_licq}, \ref{asup:slater}, \ref{asup:convex_f}, and \ref{asup:convex_h} are satisfied, then we have
	$
	\|y^{k+1} - \bar{y}\| = \orderof(\|y^{k} - \bar{y}\|).
	$
\end{theorem}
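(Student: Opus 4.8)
Here is how I would attack the superlinear‑rate statement.

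The plan is to sandwich the optimal‑value gap $P(z^{(k+1)}) - P(\bar z)$ between a lower bound proportional to the \emph{next} error and an upper bound of order $\orderof(e_k)$, where $e_k := \|z^{(k)} - \bar z\|$; dividing then gives $e_{k+1} = \orderof(e_k)$, i.e. exactly superlinear convergence (with $y^{k} = z^{(k)}$ and $\bar y = \bar z$). Throughout I would use that $z^{(k+1)}$ is the exact minimizer of the convex subproblem~\eqref{eq:sub_prob}, that $z^{(k+1)} \in F_{z^{(k)}} \subseteq F$ by Lemma~\ref{lem:invariance}, that $\bar z$ is the fixed point~\eqref{eq:fast_optimal}, and that $P = \psi \circ G$ is Lipschitz on the compact set $Y$ because $\psi$ is globally Lipschitz and $G \in C^1$.

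For the lower bound I would first upgrade~\eqref{eq:ineq_critical} into a sharp‑minimum statement for $P$ itself over $F$: for every $y \in F$ near $\bar z$, $P(y) - P(\bar z) \ge \tfrac{\gamma}{2}\|y - \bar z\|$. Given $y \in F$, set $d = y - \bar z$; a $C^1$ expansion of the inner map $G(\bar z + d) = G(\bar z) + \nabla G(\bar z)^T d + \orderof(\|d\|)$ together with the Lipschitz continuity of $\psi$ gives $P(y) = \psi\!\left(G(\bar z) + \nabla G(\bar z)^T d\right) + \orderof(\|d\|)$, and~\eqref{eq:ineq_critical} then supplies the growth $\gamma\|d\|$. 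Applying this to $y = z^{(k+1)} \in F$ yields $P(z^{(k+1)}) - P(\bar z) \ge \tfrac{\gamma}{2}\,e_{k+1}$ for $k$ large. The one gap is that~\eqref{eq:ineq_critical} is phrased for steps feasible in $F_{\bar z}$; I would close it by projecting $y$ onto $F_{\bar z}$ and noting that $F_{\bar z}$ is the linearization of $F$ at $\bar z$, so the projection moves $y$ by only $\orderof(\|y-\bar z\|)$, which is absorbed into the remainder.

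The heart of the argument is the upper bound, where the payoff of \pac appears: I claim the limit $\bar z$ violates the linearized region $F_{z^{(k)}}$ by at most $\orderof(e_k)$. The key is that each constraint is linearized at its \emph{projection} point $\bar z_j^{(k)}$ from~\eqref{eq:project}, which lies on the boundary $q_j(\bar z_j^{(k)}) = 0$; since $\bar z \in F$ gives $q_j(\bar z) \ge 0$, a first‑order expansion of the convex $q_j$ at $\bar z_j^{(k)}$ yields $l_j(\bar z, z^{(k)}) = q_j(\bar z) - \orderof(\|\bar z - \bar z_j^{(k)}\|) \ge -\orderof(e_k)$, where the non‑expansiveness of the projection gives $\|\bar z - \bar z_j^{(k)}\| \le e_k$ (constraints inactive at $\bar z$ stay strictly satisfied for large $k$, so only near‑active ones matter). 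Stacking these into a vector $\bar l$ with $\|\bar l\| = \orderof(e_k)$, I would repair $\bar z$ into $F_{z^{(k)}}$ by the correction~\eqref{eq:y2} (well defined under the LICQ of Assumption~\ref{asup:fast_licq}), obtaining $\tilde y^{(k)} \in F_{z^{(k)}}$ with $\|\tilde y^{(k)} - \bar z\| \le \beta \|\bar l\| = \orderof(e_k)$ by~\eqref{eq:deriv_ineq} (the boundary‑of‑$Y$ case treated as in Case~2 of Lemma~\ref{lem:fast_continuity}). Optimality of $z^{(k+1)}$ in~\eqref{eq:sub_prob} and Lipschitz continuity of $P$ then give $P(z^{(k+1)}) \le P(\tilde y^{(k)}) \le P(\bar z) + \orderof(e_k)$.

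Combining the two bounds gives $\tfrac{\gamma}{2}\,e_{k+1} \le P(z^{(k+1)}) - P(\bar z) \le \orderof(e_k)$, hence $e_{k+1} = \orderof(e_k)$, which is the claim. I expect the main obstacle to be the $\orderof(e_k)$ violation estimate of the previous paragraph in the \textbf{nonsmooth} setting, where no Taylor expansion of $q_j$ is available: there I would argue from convexity alone, writing $l_j(\bar z, z^{(k)}) = \langle n_j^{(k)}, \bar z - \bar z_j^{(k)} \rangle$ with $n_j^{(k)}$ the supporting‑hyperplane normal at $\bar z_j^{(k)}$ (aligned with $z^{(k)} - \bar z_j^{(k)}$), and controlling this inner product via the obtuse‑angle property of the projection together with $\|\bar z - \bar z_j^{(k)}\| \le e_k$. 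This estimate is clean (indeed $\orderof(e_k^2)$) for smooth or polyhedral obstacles, but can degrade when $\bar z$ sits at a curved nonsmooth kink with $z^{(k)}$ extremely close to that boundary, so some care — or a mild regularity condition on the active nonsmooth constraints — appears necessary to guarantee the little‑$o$ there. A secondary technical point is making the tangency/projection transfer in the lower bound fully rigorous.
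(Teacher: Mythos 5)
Your overall strategy is the same as the paper's: both proofs hinge on the growth inequality~\eqref{eq:ineq_critical} to lower-bound the new error $\|y^{k+1}-\bar y\|$, on the optimality of the convex subproblem solution to compare against $\bar y$, and on Lipschitz continuity of $\psi$ together with $G\in C^1$ to show the resulting slack is $\orderof(\|y^k-\bar y\|)$. The packaging differs: you sandwich the true penalty values $P(z^{(k+1)})-P(\bar z)$, whereas the paper never returns to $P$ itself --- it compares the two linearized models $\psi(G(y^k)+\nabla G(y^k)^Td)$ and $\psi(G(\bar y)+\nabla G(\bar y)^Td)$ directly, plugs $d=\bar y-y^k$ into the optimality condition of~\eqref{eq:sub_prob}, and bounds the difference of the two models by $L\|G(\bar y)-G(y^k)-\nabla G(y^k)^T(\bar y-y^k)\| + L\|[\nabla G(\bar y)-\nabla G(y^k)]^T(y^{k+1}-\bar y)\|$, both little-$\orderof$ terms. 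To the paper's credit this avoids your lower-bound detour (no need to pass from the model back to $P$); to your credit, you make explicit a feasibility-transfer issue that the paper silently assumes away by writing the subproblem's feasible step set as $F_{\bar z}$ and substituting $d=\bar y - y^k$ without justifying that this step is admissible.

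That said, there is a genuine gap in your upper bound, and it is exactly the one you flag. You need the violation of $F_{z^{(k)}}$ by $\bar z$ to be $\orderof(e_k)$, but the tools available in the paper only give $O(e_k)$: since $l_j(\bar z,\bar z)\ge 0$ (Lemma~\ref{lem:invariance}) and $l(\bar z,\cdot)$ is Lipschitz in its second argument (Lemma~\ref{lem:fast_lipschitz}), one gets $l_j(\bar z,z^{(k)})\ge -\gamma e_k$ and nothing better; your sharper estimate via a second-order expansion of $q_j$ at the projection point $\bar z_j^{(k)}$ requires $C^2$ smoothness, which the paper has explicitly dropped, and the convexity inequality $q_j(\bar z)\ge l_j(\bar z,z^{(k)})$ points in the wrong direction. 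With only an $O(e_k)$ repair, your sandwich delivers $e_{k+1}\le C e_k$ --- a linear rate --- not superlinearity. A symmetric caveat applies to your lower bound, where $z^{(k+1)}\in F_{z^{(k)}}$ need not lie in $F_{\bar z}$ and the claim that projection onto $F_{\bar z}$ moves it by only $\orderof(\|z^{(k+1)}-\bar z\|)$ faces the same curvature obstruction. So as written the proposal proves at best a linear rate; closing it to superlinear requires an added regularity hypothesis on the active constraints near $\bar z$ (or the paper's stronger implicit identification of the subproblem feasible set with $F_{\bar z}$, which is itself the weak point of the published argument).
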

\begin{proof}
	Since $ d^k := y^{k+1} - y^k $ is the optimal solution to the convex subproblem within $ F_{\bar{z}} $, we have $ \forall \, d \in F_{\bar{z}} $,
	$ \psi\big( G(y^k) + \nabla G(y^k)^T\,d \big) \geq \psi\big( G(y^k) + \nabla G(y^k)^T\,d^k \big). $
	Let $ d = \bar{y} - y^k $, we then have
	\begin{equation*}
	\psi\left( G(y^k) + \nabla G(y^k)^T\,(\bar{y} - y^k) \right) - \psi\left( G(y^k) + \nabla G(y^k)^T\,(y^{k+1} - y^k) \right) \geq 0.
	\end{equation*}
	Together with~\eqref{eq:ineq_critical}, we get
	\begin{align*}
	&\gamma \|y^{k+1} - \bar{y}\| \leq \;\psi\left( G(\bar{y}) + \nabla G(\bar{y})^T\,(y^{k+1} - \bar{y})  \right) - \psi\big(G(\bar{y})\big) \\
	&+ \psi\left( G(y^k) + \nabla G(y^k)^T\,(\bar{y} - y^k) \right) - \psi\left( G(y^k) + \nabla G(y^k)^T\,(y^{k+1} - y^k) \right).
	\end{align*}
	Since $ \psi $ is convex and has a compact domain, it is (locally) Lipschitz continuous (see e.g. \cite{convex_lipschitz}). Therefore, we have $\psi\left( G(y^k) + \nabla G(y^k)^T\,(\bar{y} - y^k) \right) - \psi\big(G(\bar{y})\big) \leq L \|G(y^k) - G(\bar{y}) + \nabla G(y^k)^T\,(\bar{y} - y^k) \|,$
	where $ L $ is the Lipschitz constant, and 
	\begin{align*}
	&\psi\left( G(\bar{y}) + \nabla G(\bar{y})^T\,(y^{k+1} - \bar{y})  \right) - \psi\left( G(y^k) + \nabla G(y^k)^T\,(y^{k+1} - y^k) \right) \\
	&\quad \leq L \|G(\bar{y}) - G(y^k) - \nabla G(y^k)^T\,(\bar{y} - y^k) + \left[ \nabla G(\bar{y})-\nabla G(y^k)\right]^T \left( y^{k+1} - \bar{y}\right) \| \\
	&\quad\leq L \|G(\bar{y}) - G(y^k) - \nabla G(y^k)^T\,(\bar{y} - y^k) \| + L \|\left[ \nabla G(\bar{y})-\nabla G(y^k) \right]^T \left( y^{k+1} - \bar{y}\right) \|.
	\end{align*}
	Combining the two parts, we obtain 
	$
	\|y^{k+1} - \bar{y}\| \leq \frac{2L}{\gamma} \|G(\bar{y}) - G(y^k) - \nabla G(y^k)^T\,(\bar{y} - y^k) \| + \frac{L}{\gamma} \|\left[ \nabla G(\bar{y})-\nabla G(y^k) \right]^T \left( y^{k+1} - \bar{y}\right) \|.
	$
	Since $ G(\cdot) \in C^1 $, we have the first term
	$ \|G(\bar{y}) - G(y^k) - \nabla G(y^k)^T\,(\bar{y} - y^k) \| = \orderof(\|y^{k} - \bar{y}\|). $
	Given the fact that as $ k\rightarrow \infty$, $(\nabla G(\bar{y})-\nabla G(y^k))\rightarrow 0 $, we also have the second term
	$$ \|\left[ \nabla G(\bar{y})-\nabla G(y^k) \right]^T \left( y^{k+1} - \bar{y}\right) \| = \orderof(\|y^{k+1} - \bar{y}\|). $$
	Therefore, we have
	$ \|y^{k+1} - \bar{y}\| = \orderof(\|y^{k} - \bar{y}\|). $
\end{proof}

\section{Numercial Results} \label{sec:num}
In this section, we apply the \fast algorithm to an aerospace problem and present the numerical results. Consider a multi--rotor vehicle with state at time $t_i$ given by $x_i = \left\lbrack p_i^T,v_i^T \right\rbrack^T$, where $p_i\in\mathbb{R}^3$ and $v_i\in\mathbb{R}^3$ represent vehicle position and velocity at time $t_i$ respectively. We assume that vehicle motion is adequately modeled by double integrator dynamics with constant time step $\Delta t$ such that $x_{i+1} = Ax_i + B(u_i+g)$, where $u_i\in\mathbb{R}^3$ is the control at time $t_i$, $g\in\mathbb{R}^3$ is a constant gravity vector, $A$ is the discrete state transition matrix, and $B$ utilizes zero order hold integration of the control input.
%$$ A = \left\lbrack \begin{array}{cc} I_{3\times 3} & \Delta t\cdot I_{3\times 3}\\ 0_{3\times 3} & I_{3\times 3}  \end{array} \right\rbrack \textnormal{ and,} $$
%$$ B = \left\lbrack \begin{array}{c} \frac{1}{2}\Delta t^2\cdot I_{3\times 3} \\ \Delta t\cdot I_{3\times 3} \end{array}\right\rbrack .$$
%Above, $I_{3\times 3}$ is the $3\times 3$ multiplicative identity matrix and $0_{3\times 3}$ is the $3\times 3$ additive identity (or zero) matrix. 
Further, we impose a speed upper--bound at each time step, $\|v_i\|_2\leq V_{\max}$, an acceleration upper--bound such that $\|u_i\|_2\leq u_{\max}$ (driven by a thrust upper--bound), and a thrust cone constraint $\hat{n}^Tu_i \geq \|u_i\|_2\cos(\theta_{cone})$ that constrains the thrust vector to a cone pointing towards the unit vector $\hat{n}$ (pointing towards the ceiling) with angle $\theta_{cone}$. Finally, the multi--rotor must avoid a known set of obstacles that can be either ellipsoids or polytopes. 
The $j^{th}$ ellipsoidal is expressed as $ x' A_j x + 2 b'_j x + c_j \leq 0 $, while
The $k^{th}$ polytopical is defined by its faces $ A_k x + b_k \leq 0 $.

Given these constraints, the objective is to find a minimum fuel trajectory from a prescribed $x(t_0)$ to a known $x(t_f)$ with fixed final time $t_f$ and $N$ discrete points along with $n_{ellip} + n_{polyt}$  obstacles to avoid:
\begin{problem}
	Minimum Fuel 3-DoF Multi--rotor Obstacle Avoidance
	\begin{equation*}
	\begin{aligned}
	\operatorname{min}& \; \sum_{i=1}^N \|u_i\|\\
	\textnormal{s.t. }& \; x_{i+1} = Ax_i + B(u_i+g),\ i=1,\ldots,N-1\\
	& \|u_i\|_2 \leq u_{\max},\ i = 1,\ldots,N,\\
	& \|v_i\|_2\leq V_{\max}, \ i = 1,\ldots,N,\\
	& \hat{n}^Tu_i \geq \|u_i\|_2\cos(\theta_{cone}), \ i = 1,\ldots,N,\\
	& x'_i A_j x_i + 2 b'_j x_i + c_j \geq 0,\ i=1,\ldots,N,\ j=1,\ldots,n_{ellip},\\
	& A_k x_i + b_k \geq 0, \ i=1,\ldots,N,\ k=1,\ldots,n_{polyt},\\
	& x_0 = x(t_0),\ x_N = x(t_f).
	\end{aligned}
	\end{equation*}
\end{problem}
% speal about getting a feasible is given here.

\begin{table}
	\centering
	\caption{Parameter Values of the \fast algorithm test}
	\label{tab:pars}
	\begin{tabular}{lc|lc}
		\hline\hline
		Par. & Value &  Par. & Value\rule{0pt}{2.6ex} \\
		\hline
		$N$ & 20 & $\epsilon$ & $1\times 10^{-4}$\rule{0pt}{2.6ex}\\
		$V_{\max}$ & 2 m/s & $u_{\max}$ & 13.33 m/s$^2$\\
		$g$ & $\lbrack 0,\,0,\,-9.81\rbrack^T$ m/s$^2$ & $\theta_{cone}$ & 30 deg \\
		$p_0$ & $\lbrack -2,\,6,\,0\rbrack^T$ m & $p_f$ & $\lbrack 6,\,2,\,0.5\rbrack^T$ m\\
		$v_0$ & $\lbrack 0,\,0,\,0\rbrack^T$ m/s & $v_f$ & $\lbrack 0,\,0,\,0\rbrack^T$ m/s\\
%		$p_{c,1}$ & $\lbrack -1,\,0\rbrack^T$ m & $p_{c,2}$ & $\lbrack 4,\,-1 \rbrack^T$ m\\
%		$r_1$ & 3 m & $r_2$ & 1.5 m\\
		$\lambda$ & 0 & $\hat{n}$ & $\lbrack 0,\,0,\,1\rbrack^T$\\
		$t_f$ & 15 s &&\\
		\hline\hline
	\end{tabular}
\end{table}

The parameters given in Table \ref{tab:pars} are used to obtain the numerical results presented herein. An initial feasible trajectory is obtained by using Algorithm~\ref{algo:infeas-ini}. The infeasible initial trajectory $ z^{\{0\}} $ is shown in Figure~\ref{fig:initial_traj} (black), while the feasible and (locally) optimal converged trajectory $ \bar{z} $ is shown in Figure~\ref{fig:converged_traj} (green)

%\begin{figure}[!h]
%	\centering
%	\subfloat[Initial trajectory]{
%		\includegraphics[width=.48\linewidth,trim=20 0 30 15,clip]{./pics/initial_traj.eps}}
%	\subfloat[Converged trajectory]{
%		\includegraphics[width=.48\textwidth,trim=65 0 85 30,clip]{./pics/converged_traj.eps}}
%	\caption{The \texttt{SCvx-fast} algorithm test: Initial trajectory (black circles) and the converged trajectory (blue x's)}
%	\label{fig:traj}
%\end{figure}

\begin{figure}[!h]
	\centering
	\includegraphics[width=.7\linewidth,trim=20 0 30 15,clip]{./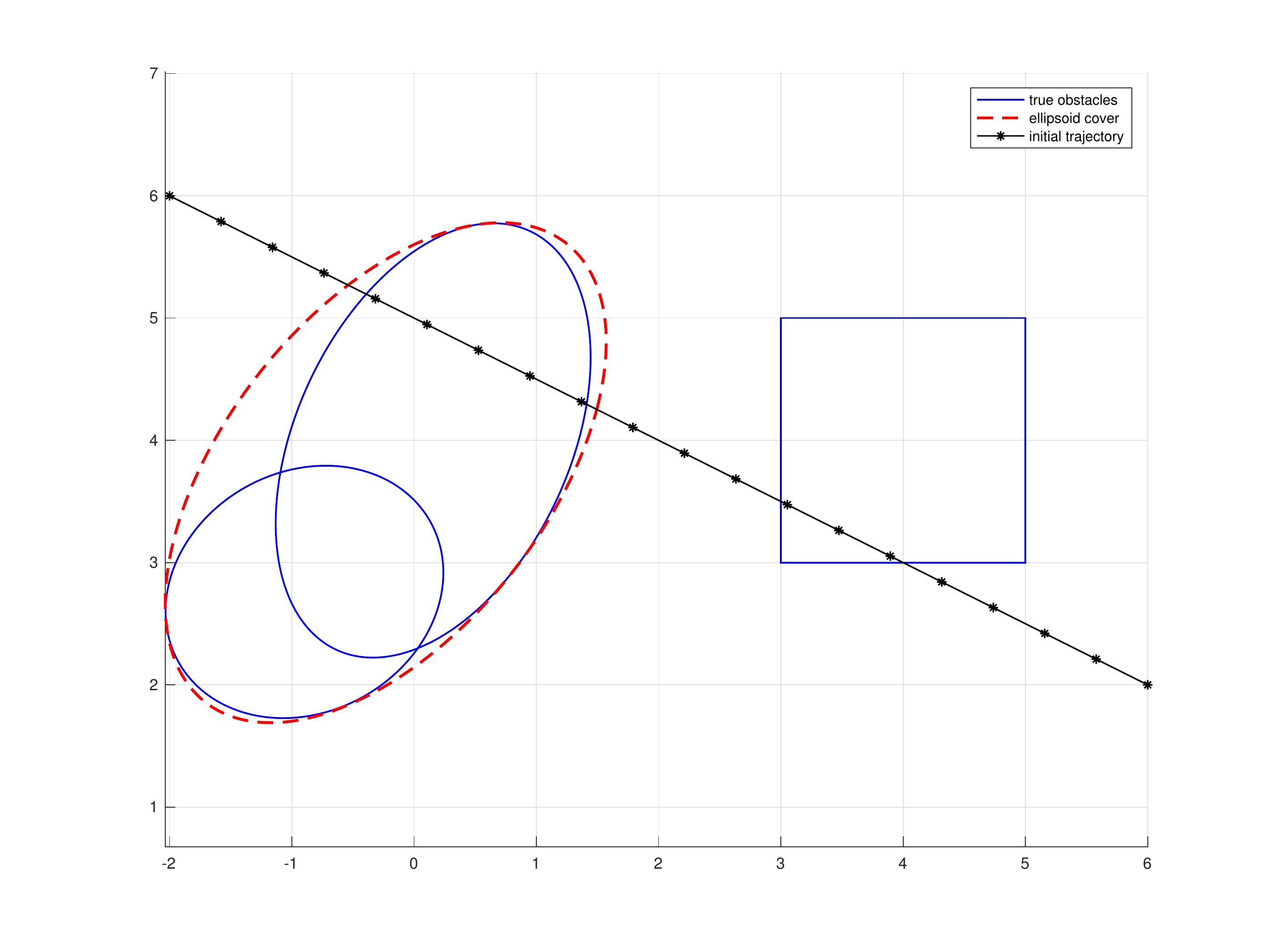}
	\caption{The \fast algorithm test: Initial trajectory (black) and the minimum volume ellipsoidal cover (red) for intersecting obstacles (blue).}
	\label{fig:initial_traj}
\end{figure}

\begin{figure}[!h]
	\centering
	\includegraphics[width=.7\linewidth,trim=20 0 30 15,clip]{./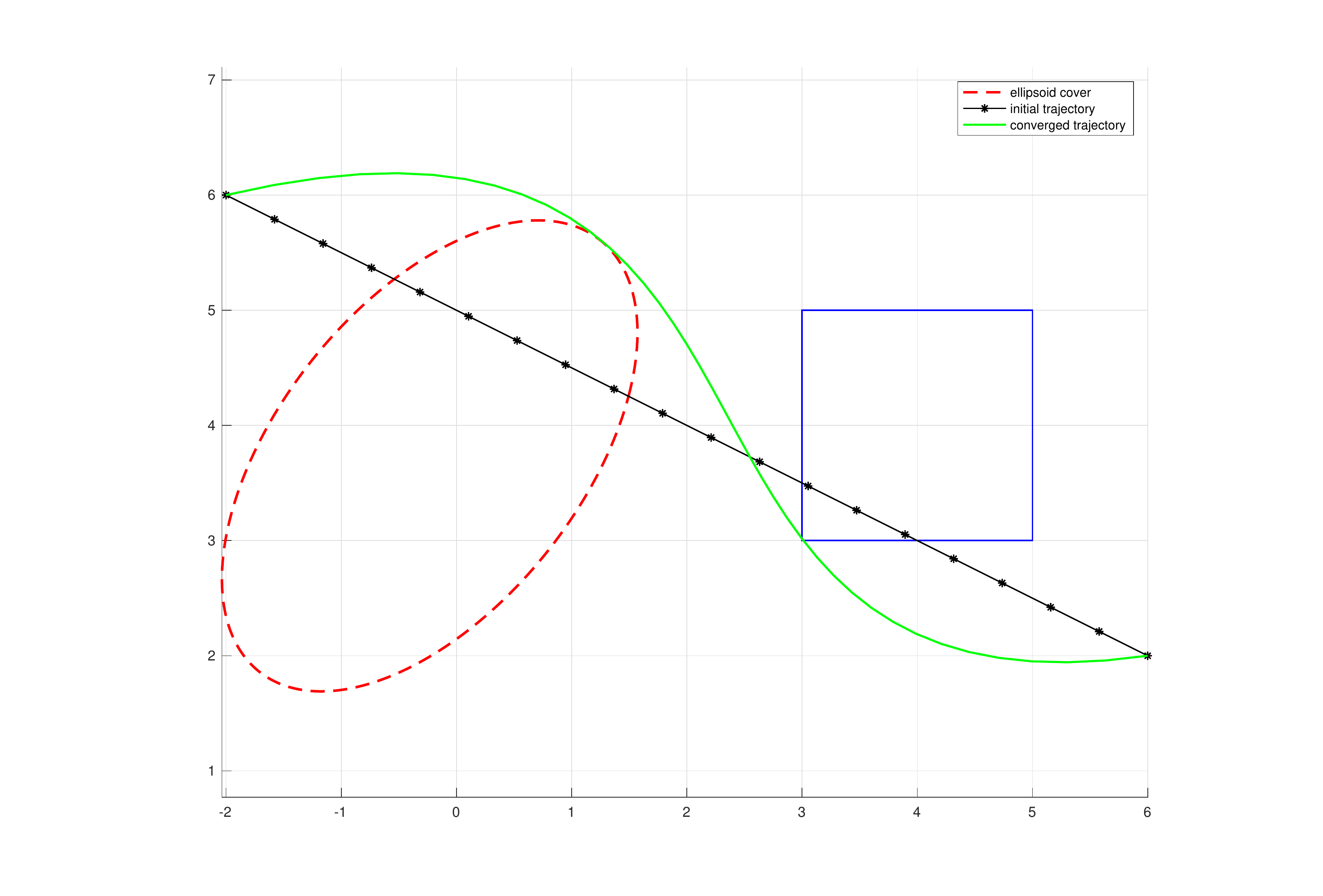}
	\caption{The \fast algorithm test: converged trajectory (green) and the minimum volume ellipsoidal cover (red) for intersecting obstacles (blue).}
	\label{fig:converged_traj}
\end{figure}

The \fast algorithm is initiated with $z^{\{0\}}$, and is considered to have converged when the improvement in the cost of the convexified problem is less than $\epsilon$. Figure \ref{fig:converged_traj} illustrates the converged trajectory that avoids the obstacles while satisfying its actuator and mission constraints (green). Note that the converged trajectory in Figure \ref{fig:converged_traj} (blue x's) is different than that of $z^{\{0\}}$ and is characterized by having a smooth curve. For $z^{\{0\}}$, $\sum_{i=1}^N \|u_i\| = 282.41$ and at the converged solution $z^{\{11\}}$, we have that $\sum_{i=1}^N \|u_i\| = 221.76$, so the cost of the converged trajectory is lower than that of the initial trajectory. At each iteration, the \fast algorithm solves an SOCP, and therefore 11 SOCPs were solved in order to produce these results. The full convergence history is shown in Figure~\ref{fig:convergence}, which clearly demonstrates the pattern of a superlinearly convergent algorithm: It improves relatively slowly at first, but significantly speeds up later on, especially when approaching the converged solution.

\begin{figure}[!h]
	\centering
	\includegraphics[width=.8\linewidth,trim=20 0 30 15,clip]{./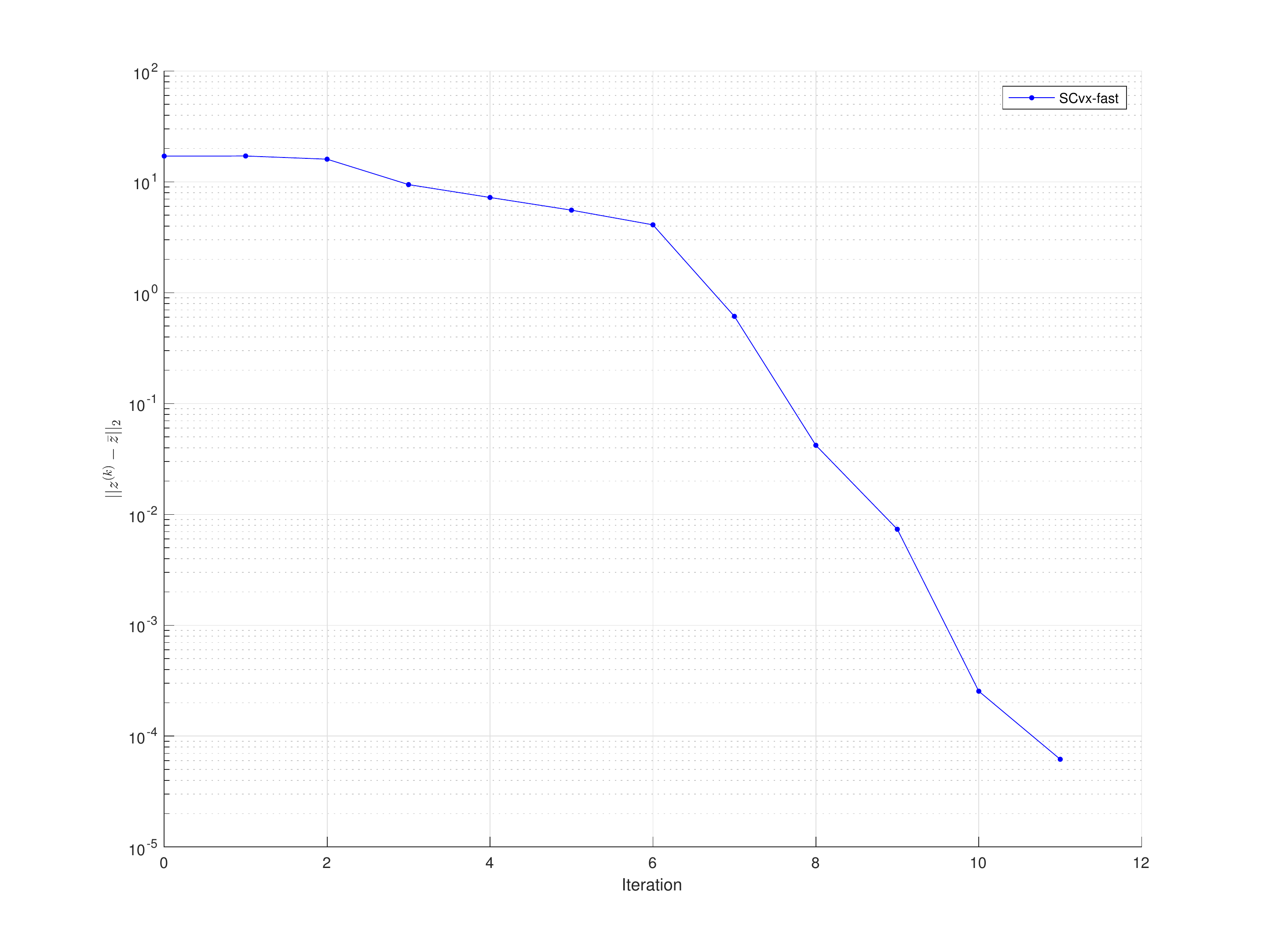}
	\caption{The \fast algorithm test: Full convergence history of the solution variable $ z $. It affirms a typical pattern for superlinear convergence rate, namely, speeding up when near the converged solution.}
	\label{fig:convergence}
\end{figure}

%\begin{figure}[!h]
%	\begin{center}
%		\includegraphics[width=0.6\textwidth,trim=65 0 85 30,clip]{./figs/fast/3d2.eps}
%		\vspace{-2mm}
%		\caption{3--D plot of the initial trajectory (black circles) and the converged trajectory (blue x's). \vspace{-4mm}} 
%		\label{fig:3d}
%	\end{center}
%\end{figure}

%For comparison, we solve the same problem using \scvx. The converged trajectories for both algorithms are identical to within $\epsilon$. The number of iterations ($k_{\scvx}$ and $k_{\texttt{SCvx-fast}}$) and time elapsed using each method ($t_{\scvx}$ and $t_{\texttt{SCvx-fast}}$) are reported in Table \ref{tab:times}. The two iterations necessary to find a feasible starting point for the \texttt{SCvx} algorithm are also reported. All times were found using the ECOS solver (\cite{domahidi2013ecos}) on a standard workstation with an Intel Xeon processor at 3.40 GHz and 16 GB of RAM. We can see that the convergence process of \texttt{SCvx-fast} takes far fewer iterations than \scvx, and the runtimes presented here show its potential in real--time applications.
%
%\begin{table}[!h]
%	\centering
%	\caption{Runtimes and Iterations of the \texttt{SCvx-fast} and the \scvx algorithms}
%	\label{tab:times}
%	\setlength{\tabcolsep}{4pt}
%	\begin{tabular*}{0.73\textwidth}{l|cccc|cc}
%		\hline\hline
%		Method & $k_{\scvx}$ & $t_{\scvx}$ & $k_{\texttt{SCvx-fast}}$ & $t_{\texttt{SCvx-fast}}$ & $k_{total}$ & $t_{total}$\\
%		\hline
%		\scvx & 14 & 149.9 ms & - & - & 14 & 149.9 ms\\
%		\texttt{SCvx-fast} & 2 & 39.3 ms & 5 & 26.5 ms & 7 & 65.7 ms\\
%		\hline\hline
%	\end{tabular*}
%\end{table}

\section{Conclusions}\label{sec:conc}
This paper formally propose \fast, a new variant within the Successive Convexification algorithmic framework. The said algorithm solves non-convex optimal control problems with specific types of state constraints (i.e. union of convex keep-out zones) and is often faster to converge than \scvx, its predecessor as shown in the numerical results. In order to preserve more feasibility, the proposed algorithm uses a novel \pac procedure to successively convexify both state constraints and system dynamics, and thus a finite dimensional convex programming subproblem is solved at each succession. It also gets rid of the dependency on trust regions, gaining the ability to take larger steps and thus ultimately attaining faster convergence. The extension is in three folds as follows. i) We can now initialize the algorithm from an infeasible starting point, and regain feasibility in just one step; ii) We get rid of the smoothness conditions on the constraints so that a broader range of ``obstacles" can be included. Significant changes are made to adjust the algorithm accordingly; iii) We obtain a proof of superlinear rate of convergence. The \fast algorithm is particularly suitable for solving trajectory planning problems with collision avoidance constraints. Numerical simulations are performed and affirms the fast convergence rate. With powerful convex programming solvers, the algorithm can be implemented onboard for real-time autonomous guidance applications.

\bibliographystyle{abbrv}
\bibliography{scvx_fast}

\end{document}